\providecommand{\U}[1]{\protect\rule{.1in}{.1in}}
\newcommand{\eqdef}{\overset{\mbox{\tiny def}}{=}}
\newtheorem{assumption}{Assumption}
\newtheorem{remark}[theorem]{Remark}
\newtheorem{example}[theorem]{Example}
\numberwithin{equation}{section}
\begin{document}

\title{Convergence of Numerical Time-Averaging and Stationary Measures via Poisson Equations}
\author{Jonathan C. Mattingly\thanks{Department of Mathematics, Center of Nonlinear
and Complex systems, Center for Theoretical and Mathematical Science, and
Department of Statistical Science, Duke University, Durham NC 27708-0320,
Email: jonm@math.duke.edu}
\and Andrew M. Stuart\thanks{Mathematics Institute, University of Warwick, Coventry
CV4 7AL, UK, Email: A.M.Stuart@warwick.ac.uk}
\and M.V. Tretyakov\thanks{Department of Mathematics, University of Leicester,
Leicester LE1~7RH, UK, Email: M.Tretyakov@le.ac.uk}}
\maketitle

\begin{abstract}
Numerical approximation of the long time behavior of a stochastic differential
equation (SDE) is considered. Error estimates for time-averaging estimators
are obtained and then used to show that the stationary behavior of the
numerical method converges to that of the SDE. The error analysis is based on
using an associated Poisson equation for the underlying SDE. The main
advantage of this approach is its simplicity and universality. It works
equally well for a range of explicit and implicit schemes including those with
simple simulation of random variables, and for hypoelliptic SDEs. To simplify
the exposition, we consider only the case where the state space of the SDE is
a torus and we study only smooth test functions. However we anticipate that
the approach can be applied more widely. An analogy between our approach and
Stein's method is indicated. Some practical implications of the results are discussed.

\textbf{AMS 000 subject classification. }Primary 65C30; secondary 60H35,
37H10, 60H10.

\noindent\textbf{Keywords}. stochastic differential equations, ergodic limits,
convergence of weak schemes, time averaging, Poisson equation.

\end{abstract}

\section{Introduction}

In many application one is interested in estimating the invariant measure of
stochastic differential equation (SDE) by running a numerical scheme which
approximates the time dynamics of the SDE. Two common approaches are to use
the numerical trajectories to construct an empirical measure by the time
averaging trajectories or by averaging many different realizations to obtain a
finite ensemble average (see for example \cite{MilsteinTretyakov2007}). In
either case one produces an approximation to the numerical method's invariant
measure and is immediately presented with a number of questions including: (i)
Does the numerical scheme have a stationary measure to which it converges
quickly as time goes to infinity? (ii) How close is the numerical method's
stationary measure to the stationary measure of the underlying SDE ? (iii) How
close is the time-averaging estimator to the stationary measure of the
underlying SDE? This article mainly focuses on the second two questions.


The first question is addressed in many papers, including \cite{robtwe96,
Talay90,
MattinglyStuartHighamSDENUM02,LambertonPages02,KlokovVeretennikov2006}. There
are also a number of works where the second question has been considered.
In \cite{Talay90} it was shown that several numerical methods for SDEs,
including the forward Euler-Maruyama scheme, have unique stationary measures
which converge at the expected rate to the unique stationary measure of the
SDE. Moreover, in \cite{TalayTubaro90} an expansion of the numerical
integration error in powers of time step was obtained which allows one to use
the Richardson-Romberg extrapolation to improve the accuracy. These works
discuss the convergence in the case of relatively smooth tests functions. In
\cite{BallyTalayLESI96,BallyTalayLESII96}, the authors use techniques from
Malliavin calculus to establish smoothing properties of the discretization
scheme. This allows one to prove the convergence of the averages of test
functions which are only bounded. In turn, this shows that the distance
between the numerical method's stationary measure is close to the stationary
measure of the SDE in the total-variation norm.
In \cite{ShardlowStuartPENA00}, a general method of deducing closeness of the
stationary measure from an error estimate on a finite time interval is
proposed, though in general it does not provide optimal estimates of the
convergence rate. In the earlier works a global Lipschitz assumption was
imposed on the coefficients of the SDE which was lifted in
\cite{RobertsTweedie96,MattinglyStuartHighamSDENUM02,TalayHam02,BouVan09}. The
papers \cite{Talay90,TalayTubaro90} deal with elliptic SDEs, while the
hypoelliptic case is treated in
\cite{BallyTalayLESI96,BallyTalayLESII96,TalayHam02,MattinglyStuartHighamSDENUM02,Lem07,BouVan09}%
.

In this paper we obtain error estimates for time-averaging estimators. The
error estimates are given in terms of a term of the same order of magnitude as
the weak finite time error of the numerical integrator used and the length of
the simulation. Control of the time-averages is then used to prove closeness
of the numerical method's stationary measure to the stationary measure of the
SDE. The convergence rate obtained is the same as the weak convergence rate
obtained on a finite time interval. This highlights the general principle
that, for time-dependent problems, finite time approximation properties can be
transferred to infinite time approximation properties in the presence of
suitable stability. Here the underlying stability is the geometric ergodicity
of the SDE.


The main tool in our analysis is an associated Poisson equation for the
underlying SDE. In this error analysis, the Markov chain generated by the
numerical method need not be uniquely ergodic. It is shown that any stationary
measure of the numerical method will be close to the unique stationary measure
of the underlying SDE. We see the main advantage of the current analysis being
its simplicity and universality. In particular, we obtain the error estimates
in the case of numerical schemes with any reasonable simple random variables
(including the discrete random variables used in weak Euler schemes
\cite{MilsteinTretyakov2004}). Furthermore, our approach works equally well
for a range of explicit and implicit schemes, and, perhaps more importantly,
it works for hypoelliptic SDEs. It is comparatively short and the analysis
leverages classical results on PDEs. It should be possible to carry over this
approach to methods with adaptive step-sizes (see
\cite{LambaMattinglyStuart07,Lem07}). It should also be adaptable to the SPDE
setting, using the Poisson equation in infinite dimensions (see related
applications of Poisson equations in \cite{CerraiFreidlin09,DaPratoZabczyk02}).

Our goal here has not been to give the most general results. We have picked
the simplifying setting of a compact phase space, namely the $d$-dimensional
torus, and relatively smooth test functions. The extension to the whole space
requires control of the time spent outside of the center of the phase space.
Under additional assumptions in the spirit of
\cite{MattinglyStuartHighamSDENUM02}, we believe that versions of the present
results can be proven in $\mathbf{R}^{d}$.

In Section~\ref{sec:sde}, we introduce the basic setting for the underlying
SDE and discuss the hypoellipticity assumptions. In
Section~\ref{sec:numerical}, we describe the class of numerical approximations
we consider. We give examples of explicit and implicit methods which fit our
framework. In Section~\ref{sec:poisson}, we discuss the auxiliary Poisson
equation which will be used to prove the main results, and we give properties
of its solution. In Section~\ref{sec:LLN}, we warm up by showing how to prove
a law of large numbers, for the SDE,
using an auxiliary Poisson equation. Section~\ref{sec:Closeness} contains the
main results of the article which give a number of senses in which the
numerical time-averaging estimators are close to the corresponding stationary
time average of the SDE. In Section~\ref{sec:high}\ we extend the results of
Section~\ref{sec:Closeness} to numerical methods of higher orders and in
Section~\ref{sec:expan} the Richardson-Romberg (Talay-Tubaro) error expansion
is considered. Section~\ref{sec:ClosenessII} uses the results of
Section~\ref{sec:error} to give a quantitative estimate on the distance of the
numerical stationary measures from the underlying stationary measure for the
SDE. In Section~\ref{sec:stein} we make some general comments about analogies
with Stein's method for proving the convergence of distributions to a limiting
law. Some practical implications of the results of Section~\ref{sec:error} are
discussed in Section~\ref{sec:practice}, where we classify the errors of the
numerical time-averaging estimators and, in particular, pay attention to the
statistical error.

\section{SDE Setting}

\label{sec:sde} Let $(\Omega,\mathcal{F},\mathcal{F}_{t},\mathbf{P}),$
$t\geq0,$ be a filtered probability space and $W(t)=(W_{1}(t),\ldots
,W_{m}(t))^{\top}$ be an $m$-dimensional $\{\mathcal{F}_{t}\}_{t\geq0}%
$-adapted standard Wiener process. Consider the Markov process $X(t)$ on the
torus\footnote{In physical applications a process on the torus can be of
interest when periodic boundary conditions imposed on the problem. A typical
example is a noisy gradient system which may be used to sample from the Gibbs'
distribution
with a periodic potential \cite{StuartPavliotis08}. We also note that our
results can potentially be applied to approximating Lyapunov exponents on
compact smooth manifolds \cite{GrorudTalay96}.} $\mathbf{T}^{d}$ whose time
evolution is governed by the Ito SDE
\begin{equation}
dX(t)=f(X(t))\,dt+g(X(t))\,dW(t)\ ,\label{eq:sde}%
\end{equation}
where $f=(f_{1},\ldots,f_{d})^{\top}\colon$ $\mathbf{T}^{d}\rightarrow
\mathbf{R}^{d}$ and $g\colon$ $\mathbf{T}^{d}\rightarrow\mathbf{R}^{d\times
m}$. We assume that $f$ and $g$ are Lipschitz continuous (and hence uniformly
bounded) functions on $\mathbf{T}^{d}$. Under these assumptions, equation
\eqref{eq:sde} has a unique pathwise global solution. The generator of the
Markov process $X(t)$ is
\begin{align}
\mathcal{L} &  \eqdef f.\nabla+\frac{1}{2}a\colon\!\!\nabla\nabla\ \label{eq:genSDE}%
\\
&  =\sum_{i=1}^{d}f_{i}\frac{\partial}{\partial x_{i}}+\frac{1}{2}\sum
_{i,j=1}^{d}a_{ij}\frac{\partial^{2}}{\partial x_{i}\partial x_{j}%
},\,\nonumber
\end{align}
where $a(x)=g(x)g^{\top}(x)$. For a twice differentiable function $\phi
\colon\mathbf{T}^{d}\rightarrow\mathbf{R}$ and $x\in\mathbf{T}^{d}$ we have
\begin{align*}
(\mathcal{L}\phi)(x) &  =f(x).\nabla\phi(x)+\frac{1}{2}a(x):\nabla\nabla
\phi(x)\\
&  =\sum_{i=1}^{d}f_{i}(x)\frac{\partial\phi}{\partial x_{i}}(x)+\frac{1}%
{2}\sum_{i,j=1}^{d}a_{ij}(x)\frac{\partial^{2}\phi}{\partial x_{i}\partial
x_{j}}(x)\,.
\end{align*}
The operator $\mathcal{L}$ generates a strong Markov semigroup $P_{t}$, for
$t\geq0$, which maps smooth bounded functions to smooth bounded functions. It
can also be defined by $(P_{t}\phi)(x)=\mathbf{E}_{x}\phi(X_{t})$ where
$X_{0}=x$. By duality, $P_{t}$ also induces a semigroup which acts on
probability measures. In the name of notational economy, we will also denote
this semigroup by $P_{t}$.

The $k$-th column of $g$, which we denote by $g^{(k)}$, can be viewed as a
function from $\mathbf{T}^{d}\rightarrow\mathbf{R}^{d}$. Hence $\{f,g^{(1)}%
,\dots,g^{(m)}\}$ is a collection of vector fields on $\mathbf{T}^{d}$ and
\eqref{eq:sde} can be rewritten as
\[
dX(t)=f(X(t))\,dt+\sum_{k=1}^{m}g^{(k)}(X(t))\,dW_{k}(t).
\]
This form of equation \eqref{eq:sde} makes it clear that there is a
deterministic drift in the direction of $f$ and independent random kicks in
each of $m$ directions $\{g^{(1)},\dots,g^{(m)}\}$. We will require that the
randomness injected into the dynamics in the $g^{(k)}$ directions effects all
directions sufficiently to produce a \textquotedblleft
smoothing\textquotedblright\ effect at the level of probability densities.
Uniform ellipticity is the simplest assumption which ensures \textquotedblleft
sufficient spreading\textquotedblright\ of the randomness. However, it is far
from necessary and many important examples are not uniformly elliptic. For our
purposes, it is sufficient that the system is hypoelliptic.

The conditions given in our standing Assumption~\ref{ass:standing}, given
below, are enough to ensure hypoellipticity. To state them, we need to recall
the definition of the Lie-bracket (or commutator) of two vector fields. In our
setting, given two vector fields $h,\tilde{h}$ on $\mathbf{T}^{d}$ with
$h=(h_{1},\dots,h_{d}) ^{\top}$ and $\tilde{h}=(\tilde{h}_{1},\dots,\tilde
{h}_{d}) ^{\top}$ one defines the Lie-bracket as $[h,\tilde{h}%
](x)\overset{\mbox{\tiny def}}{=}(h.\nabla\tilde{h})(x)-(\tilde{h}.\nabla
h)(x)$. Hence the $j$th component of $[h,\tilde{h}](x)$ is given by
$\sum_{i=1}^{d}\big(h_{i}(x)\frac{\partial\tilde{h}_{j}}{\partial x_{i}%
}(x)-\tilde{h}_{i}(x)\frac{\partial h_{j}}{\partial x_{i}}(x)\big)$. The
vector $[h,\tilde{h}](x)$ may be thought of as the new direction generated by
infinitesimally following $h$, then $\tilde{h}$, then $-h$ and finally
$-\tilde{h}$. Our most general assumption will be that the collection of all
the brackets generated by the randomness spans the tangent space at all
points. To track how the noise spreads, we introduce the following increasing
set of vector fields
\[
\Lambda_{0}=\mathrm{span}\{f,g^{(1)},\ldots,g^{(m)}\},\ \ \Lambda
_{n+1}=\mathrm{span}\big\{h,\,[\bar{h},h]:\ h\in\Lambda_{n},\ \bar{h}%
\in\Lambda_{0}\big\}\ .
\]

The following is our basic assumption governing the stationary measure of the
system, and its smoothness.

\vspace{1ex}

\begin{assumption}
We assume that one of the following two assumptions hold:\label{ass:standing}

\begin{enumerate}
\item \label{ass1i}(Elliptic Setting) The matrix-valued function
$a(x)=g(x)g^{\top}(x)$ is uniformly positive definite: there exists $\alpha>0$
so that, for all $z\in\mathbf{R}^{d}$ and $x\in\mathbf{T}^{d}$, $\alpha
|z|^{2}\leq a(x)z.z\ .$

\item \label{ass1ii} (Hypoelliptic setting) The functions $f$ and $g$ are
$C^{\infty}(\mathbf{T}^{d},\mathbf{R}^{d})$ and such that, for some $n$ and
all $x\in\mathbf{T}^{d}$, $\Lambda_{n}=\Lambda_{n}(x)=\mathbf{R}^{d}$ and
\eqref{eq:sde} possesses a unique stationary measure.
\end{enumerate}
\end{assumption}

\vspace{1ex}

In either case in Assumption~\ref{ass:standing}, we have that \eqref{eq:sde}
has a unique stationary measure, henceforth denoted by $\mu$, which has a
density with respect to Lebesgue measure on ${\mathbf{T}}^{d}$.

As already mentioned, the goal of this paper is to give a simple, yet robust,
proof that time averages obtained using a class of numerical methods for
simulating \eqref{eq:sde} is close to the corresponding ergodic limits of
\eqref{eq:sde}. To this end, we now describe the class of numerical methods we
will consider.

\section{Numerical approximations}

\label{sec:numerical} We will begin by considering a class of simple numerical
approximation of \eqref{eq:sde} given by the generalized Euler-Maruyama method
on the torus $\mathbf{T}^{d}:$
\begin{equation}
\left\{
\begin{aligned} X_{n+1}&= X_n + F(X_n,\Delta )\,\Delta + G(X_n,\Delta )\eta_{n+1}\sqrt{\Delta } \, , \\ X_0 &= x_0 \, , \end{aligned}\right.
\label{eq:euler}%
\end{equation}
where $\Delta$ is the time increment, $F\colon\mathbf{T}^{d}\times
(0,1)\rightarrow\mathbf{R}^{d}$, $G\colon\mathbf{T}^{d}\times(0,1)\rightarrow
\mathbf{R}^{d\times m}$, and $\eta_{n}=(\eta_{n,1},\ldots,\eta_{n,m})^{\top}$
is an $\mathbf{R}^{m}$-valued random variable and $\{\eta_{n,i}:n\in
\mathbf{N},i\in\{1,\ldots,m\}\}$ is a collection of i.i.d. real-valued random
variables satisfying
\[
\mathbf{E}\eta_{n,i}=\mathbf{E}\eta_{n,i}^{3}=0,\qquad\mathbf{E}\eta_{n,i}%
^{2}=1,\qquad\mathbf{E}\eta_{n,i}^{2r}<\infty\,
\]
for a sufficiently large $r\geq2.\footnote{We do not specify $r$ in the
statements of the forthcoming theorems since common numerical schemes use
random variables with bounded moments up to any order. At the same time, in
each proof of Section \ref{sec:Closeness} it is not difficult to recover the
number of bounded moments required.}$ More general methods will be
considered in Section~\ref{sec:high}.

For example, $\eta_{1,1}\sim\mathcal{N}(0,1)$ satisfies these assumptions as
well as $\eta_{1,1}$ with the law
\begin{equation}
\mathbf{P}(\eta_{1,1}=\pm1)=1/2\ . \label{eq:dis}%
\end{equation}
We also note that the above two choices of $\eta_{1,1}$ guarantee existence of
finite moments of $\eta_{1,1}$ of any order. Further, we assume that $\eta
_{n},$ $n\in\mathbf{N},$ is defined on the probability space $(\Omega
,\mathcal{F},\mathbf{P})$ and is $\{\mathcal{F}_{t_{n}}\}$-adapted, for
$t_{n}=n\Delta.$

Since we want the dynamics of \eqref{eq:euler} to be \textquotedblleft
close\textquotedblright\ to those of \eqref{eq:sde}, we make the following
assumption on the \textquotedblleft local error\textquotedblright\ of \eqref{eq:euler}.

\vspace{1ex}

\begin{assumption}
\label{ass:eular} For some $C >0$, all $x \in\mathbf{T}^{d}$, and all $\Delta$
sufficiently small
\begin{equation}
\label{goodMethod}| F(x,\Delta) -f(x)| + | G(x,\Delta)-g(x)| \leq C \Delta\,.
\end{equation}

\end{assumption}

\vspace{1ex}

Under this assumptions we expect that $X_{n} \approx X(t_{n})$ where as before
$t_{n}=n\Delta$. In fact it follows from the general theory
\cite{MilsteinTretyakov2004} that such numerical schemes have first-order weak
convergence on finite time intervals.

In analogy to \eqref{eq:genSDE}, we define an operator associated to
\eqref{eq:euler} by
\[
\mathcal{L}^{\Delta}(\,\cdot\,)=F(\,\cdot\,,\Delta).\nabla+\frac{1}%
{2}A(\,\cdot\,,\Delta)\colon\!\!\nabla\nabla\ ,
\]
where $A(x,\Delta)=G(x,\Delta)G^{\top}(x,\Delta)$. While not the generator of
the Markov process \eqref{eq:euler}, it is the generator's leading
order part since
\begin{equation}
\mathbf{E}\Big(\phi(X_{1})-\phi(x_{0})\Big)=(\mathcal{L}^{\Delta}\phi
)(x_{0})\Delta+O(\Delta^{2})\quad\text{ as }\quad\Delta\rightarrow0\,.
\label{eq:SDEexpand}%
\end{equation}
Since (by Assumption~\ref{ass:eular}) we have that
\begin{equation}
|\mathcal{L}^{\Delta}\phi-\mathcal{L}\phi|_{\infty}\leq C\Delta|D^{2}%
\phi|_{\infty}\ , \label{eq:genapp}%
\end{equation}
where $D^{2}\phi$ is the second derivative, we deduce that
\begin{equation}
\mathbf{E}\Big(\phi(X_{1})-\phi(x_{0})\Big)=(\mathcal{L}\phi)(x_{0}%
)\Delta+O(\Delta^{2})\quad\text{ as }\quad\Delta\rightarrow0\,.
\label{eq:genApprox}%
\end{equation}
It is reasonable to expect that the distribution of the dynamics of the SDE
and its approximation will be close to each other since the leading part of
the generator of the approximation \eqref{eq:euler} is close to that of the
original process \eqref{eq:sde}.

We close this section by giving two approximation methods which satisfy
Assumption~\ref{ass:eular}.

\begin{example}
[Explicit Euler-Maruyama]We define
\begin{equation}
X_{n+1}=X_{n}+f(X_{n})\Delta+g(X_{n})\eta_{n+1}\sqrt{\Delta} \label{Euler-Mar}%
\end{equation}
and hence $F(x,\Delta)=f(x)$ and $G(x,\Delta)=g(x)$.
\end{example}

\begin{example}
[Implicit split-step]We define
\begin{align*}
X_{n+1}^{\ast}  &  =X_{n}+f(X_{n+1}^{\ast})\Delta\\
X_{n+1}  &  =X_{n+1}^{\ast}+g(X_{n+1}^{\ast})\eta_{n+1}\sqrt{\Delta}%
\end{align*}
and hence $F(x,\Delta)=f(y)$ and $G(x,\Delta)=g(y)$ where $y$ is and element
of $\{y\in\mathbf{T}^{d}:y=x+f(y)\Delta\}$ which is closest to $x$.
\end{example}

\begin{remark}
\label{rmk:localError} From \eqref{eq:genApprox} and the backwards Kolmogorov
equation for \eqref{eq:sde}, Assumption~\ref{ass:eular} is equivalent to
\begin{equation}
\mathbf{E}\phi(X_{1})-\mathbf{E}\phi(X(\Delta))=O(\Delta^{2}) \label{locer}%
\end{equation}
for all sufficiently smooth $\phi$, provided that $X_{0}=X(0)$. In the
language of consistency and stability of a numerical method used in
the introduction, \eqref{eq:genApprox} and \eqref{locer} provide an appealing way to
characterize the degree of local consistency of a numerical method. This is will be
the starting point for our discussion of higher order methods in Section~\ref{sec:high}.
\end{remark}

\section{Poisson equation}

\subsection{Background}

\label{sec:poisson} It is a \textquotedblleft meta-theorem\textquotedblright%
\ in averaging, homogenization and ergodic theory that if one can solve the
relevant Poisson equation then one can prove results about the desired limit
of a time-average. A central theme of this paper is to show how an appropriate
Poisson equation can be used to analyze the long time average of a given
function $\phi\colon\mathbf{T}^{d}\rightarrow\mathbf{R}$, evaluated along a
numerical approximation of \eqref{eq:sde} and obtain information about its
closeness to the corresponding ergodic limits.

In this section, for motivation, we illustrate key ideas related to the
Poisson equation, purely in the continuous time setting. To this end,
recalling that $\mu$ is the unique stationary measure of \eqref{eq:sde} and
$\mathcal{L}$ its generator, given $\phi\colon\mathbf{T}^{d}\rightarrow
\mathbf{R}$ we define the stationary average of $\phi$ by
\begin{equation}
\bar{\phi}=\int_{\mathbf{T}^{d}}\phi(z)\,\mu(dz) \label{eq:ergli}%
\end{equation}
and let $\psi$ solve the Poisson equation
\begin{equation}
\mathcal{L}\psi=\phi-\bar{\phi}\,. \label{eq:poss}%
\end{equation}
Under Assumption~\ref{ass:standing}, \eqref{eq:poss} possesses a unique
solution which is at least as smooth as $\phi$. For $k\in\mathbf{N}$, we
denote by $W^{k,\infty}$ the space of functions from $\mathbf{T}^{d}$ to
$\mathbf{R}$ such that the function and all of its partial derivatives up to
order $k$ are essentially bounded. We then have the following result whose
proof we sketch.

\begin{theorem}
\label{thm:poison} Under Assumption~\ref{ass:standing}-\ref{ass1i}), given any
$\phi\in W^{k,\infty}$, with $k\in\mathbf{N}\cup\{0\}$ there exists a unique
solution $\psi\in W^{k+2,\infty}$ to \eqref{eq:poss}. Under
Assumption~\ref{ass:standing}-\ref{ass1ii}) there exists a $\delta>0$ so that
given any $\phi\in W^{k,\infty}$, with $k\in\mathbf{N}\cup\{n \geq2\}$, there
exists a unique solution $\psi\in W^{k+\delta,\infty}$ to \eqref{eq:poss}.
\end{theorem}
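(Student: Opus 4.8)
The plan is to reduce the solvability of the Poisson equation \eqref{eq:poss} to classical elliptic (resp. hypoelliptic) PDE theory on the compact manifold $\mathbf{T}^{d}$, using the fact that $\mathcal{L}$ is a second-order operator whose leading symbol is given by $a(x)$. I would first treat the elliptic case. Here $\mathcal{L}$ is a uniformly elliptic operator with smooth (in fact Lipschitz, and one can regularise) coefficients, and the key structural fact is that $\mathcal{L}$ is the generator of a geometrically ergodic Markov process with a unique invariant density; consequently its $L^{2}(\mu)$-spectrum has a spectral gap, $0$ is a simple eigenvalue, and the range of $\mathcal{L}$ is the orthogonal complement of the constants. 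Since $\phi-\bar\phi$ integrates to zero against $\mu$ by the definition \eqref{eq:ergli}, the Fredholm alternative applied to $\mathcal{L}$ on the torus gives a unique (up to additive constant, which we fix by demanding $\int \psi\,d\mu = 0$, or simply by uniqueness in the quotient) weak solution $\psi$. The gain of two derivatives then follows from interior elliptic regularity: since $\mathbf{T}^{d}$ has no boundary, a global Schauder / $L^{p}$ estimate of the form $\|\psi\|_{W^{k+2,\infty}} \le C\big(\|\mathcal{L}\psi\|_{W^{k,\infty}} + \|\psi\|_{\infty}\big)$ applies verbatim, giving $\psi \in W^{k+2,\infty}$ whenever $\phi-\bar\phi \in W^{k,\infty}$, i.e. whenever $\phi \in W^{k,\infty}$.

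For the hypoelliptic case I would argue similarly, but replace uniform ellipticity by H\"ormander's bracket condition. Assumption~\ref{ass:standing}-\ref{ass1ii}) states precisely that $\Lambda_{n}(x)=\mathbf{R}^{d}$ for all $x$, which is the parabolic H\"ormander condition for the operator $\partial_t - \mathcal{L}$ (the drift $f$ is included in $\Lambda_0$, and brackets are taken with $\Lambda_0$), so $\mathcal{L}$ is hypoelliptic and, crucially, $P_t$ has a smooth transition density; together with the assumed uniqueness of the stationary measure this yields geometric ergodicity and hence, as before, that $0$ is an isolated simple point of the spectrum and $\mathrm{Range}(\mathcal{L}) \supseteq \{\phi - \bar\phi\}$. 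Existence and uniqueness of $\psi$ (modulo constants) follows. The regularity statement is now the subtle point: hypoelliptic operators do not gain a full two derivatives, only a fractional amount depending on the step $n$ at which the brackets fill out the tangent space. I would invoke the subelliptic estimates of Kohn / H\"ormander / Rothschild--Stein: there is $\delta>0$ (one can take $\delta$ of order $1/2^{n}$ or, with sharper results, $2/(2n)$-type) such that $\|\psi\|_{W^{k+\delta,\infty}} \le C\big(\|\mathcal{L}\psi\|_{W^{k,\infty}} + \|\psi\|_{\infty}\big)$ provided $k$ is large enough that the scale of spaces is well-behaved — which is why the statement restricts to $k \in \mathbf{N} \cup \{n \ge 2\}$. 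This bootstraps $\psi \in W^{k+\delta,\infty}$ from $\phi \in W^{k,\infty}$.

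The main obstacle I anticipate is the hypoelliptic regularity bookkeeping: identifying the correct $\delta$, making sure the subelliptic a priori estimate is available in the $L^{\infty}$-based Sobolev scale $W^{k,\infty}$ rather than the $L^{2}$-based one (this typically forces a loss that one absorbs by taking $k$ slightly larger, or by Sobolev embedding after working in $W^{k,p}$ for large $p$), and checking that the restriction on $k$ in the statement is exactly what the subelliptic estimate needs. A secondary, more routine obstacle is justifying the spectral-gap/Fredholm step rigorously: one must cite the geometric ergodicity of \eqref{eq:sde} (which under Assumption~\ref{ass:standing} holds by Harris-type arguments on the compact torus) to guarantee that $\phi - \bar\phi$ genuinely lies in the range of $\mathcal{L}$ and that the solution is unique up to constants; on the compact torus this is standard but deserves a sentence. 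Since the theorem is stated as one "whose proof we sketch," I would present the elliptic case in full detail and the hypoelliptic case by reference to the subelliptic literature, emphasising only the extraction of $\delta$.
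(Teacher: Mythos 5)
Your proposal is correct and follows essentially the same route as the paper: existence and uniqueness via the Fredholm alternative (using that $\phi-\bar\phi$ is orthogonal to the kernel of $\mathcal{L}$, identified through the unique invariant measure), followed by elliptic, respectively hypoelliptic/subelliptic, regularity for the gain of $2$, respectively $\delta$, derivatives, with the passage to the $L^{\infty}$-based scale obtained by working in $W^{k+\delta,p}$ for every $p$ and intersecting on the finite-measure torus. The only cosmetic difference is that you obtain the spectral gap from geometric ergodicity of the process, whereas the paper deduces compact resolvent and discrete spectrum directly from the subelliptic a priori estimate $\Vert(-\triangle)^{\alpha}\phi\Vert_{L^{2}}\leq C(\Vert\mathcal{L}\phi\Vert_{L^{2}}+\Vert\phi\Vert_{L^{2}})$; both yield the same conclusion.
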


\begin{proof}
Let $\triangle$ denote the Laplacian on the $d$ dimensional torus. In either
the hypoelliptic or elliptic case, one knows that for some positive $\alpha$
and $C$
\[
\Vert(-\triangle)^{\alpha}\phi\Vert_{L^{2}}\leq C(\Vert\mathcal{L}\phi
\Vert_{L^{2}}+\Vert\phi\Vert_{L^{2}})
\]
for all smooth $\phi$. (See \cite{HormanderIV94} and recall that our space is
compact.) This implies that $\mathcal{L}$ has compact resolvent and discrete
spectrum (consisting of isolated points). Thus $\mathcal{L}$ has a spectral
gap and is invertible on the complement of the span of the eigenfunctions with
zero eigenvalue. The space is nothing other than the functions which have zero
mean with respect to an invariant measure for the corresponding semigroup
$P_{t}$ (i.e. $\mu$ such that $P_{t}\mu=\mu$ or in terms of the generator
$\mathcal{L}^{\ast}\mu=0$). Since the system has a unique invariant measure
due to Assumption~\ref{ass:standing}, we see that $\phi-\bar{\phi}$ has zero
projection onto the space spanned by eigenfunctions with eigenvalue zero.
Hence we know there exists a function $u$ which solves \eqref{eq:poss} weakly,
by the Fredholm alternative.

This leaves the regularity. In the uniformly elliptic case, see
\cite{Hasminskii1980,Krylov96Hol,StroockPDE08}. In the hypoelliptic case the
results follow from Theorem 7.3.4 of \cite{StroockPDE08} or
\cite{HormanderIV94}. This states that there is $\delta>0$ such that, if
$\phi\in W^{k,p},$ then $u\in W^{k+\delta,p}$ for $p\in\lbrack2,\infty)$ and
$k\in\mathbf{N}$. Since our $\phi\in W^{k,p}$ for all $p\geq2$, we know that
$u\in\bigcap_{p\geq1}W^{k+\delta,p}$. Since our space has finite measure, we
know that $W^{k+\delta,\infty}=\bigcap_{p\geq2}W^{k+\delta,p}$.


\end{proof}

\begin{remark}
One of the principle technical issues that must be addressed in order to
extend the results in this paper from the $\mathbf{T}^{d}$ case to general
ergodic SDE on $\mathbf{R}^{d}$ is proof of the existence of nice,
well-controlled solutions to the above Poisson equation. Many of the needed
results can be found in
\cite{pardoux_poisson_2001,pardoux_poisson_2003,pardoux_poisson_2005}.
\end{remark}

\subsection{A strong law of large numbers: an illustrative example}

\label{sec:LLN} We now show how to use the Poisson equation to prove a law of
large numbers for \eqref{eq:sde}. This idea appears frequently in the
literature \cite{bhattacharya_1982,
EthierKurtz86,Papanicolaou99,Pag01,StuartPavliotis08}. Nonetheless since this
technique is central to our investigation of discrete time approximations to
diffusions, we first highlight the main ideas in continuous time. However,
before we begin it is worth noting that, at least formally, the solution to
Poisson equation can be written as
\begin{equation}
\psi(x)\overset{\mbox{\tiny def}}{=}-\int_{0}^{\infty}P_{s}(\phi-\bar{\phi
})(x)ds\,. \label{eq:solPoisson}%
\end{equation}
This can be interpreted as the total fluctuation over time of $P_{t}\phi$ from
$\bar{\phi}$; and hence, it is not surprising that $\psi$ can be used to
control convergence to equilibrium.

As stated in Theorem~\ref{thm:poison} when Assumption~\ref{ass:standing}
holds, given any $\phi\colon$ $\mathbf{T}^{d}\rightarrow\mathbf{R}$ with
$\phi\in W^{2,\infty}$ there exists a unique $\psi\colon$ $\mathbf{T}%
^{d}\rightarrow\mathbf{R}$ which solves \eqref{eq:poss}. Furthermore $\psi\in
W^{2,\infty}$ and hence It\^{o}'s formula then tells us that
\begin{equation}
\psi(X(t))-\psi(x_{0})=\int_{0}^{t}\big(\phi(X(s))-\bar{\phi}\big)\,ds+\int%
_{0}^{t}(\nabla\psi)(X(s))\,.\,g(X(s))dW(s)\ . \label{eq:ito}%
\end{equation}
Rearranging this, we obtain that
\begin{equation}
\frac{1}{t}\int_{0}^{t}\phi(X(s))\,ds-\bar{\phi}=\frac{\psi(X(t))-\psi(x_{0}%
)}{t}-\frac{1}{t}M(t)\ , \label{eq:LLNPeqn}%
\end{equation}
where $M(t)=\int_{0}^{t}(\nabla\psi)(X(s))\,.\, g(X(s))\,dW(s)$. Since $\psi$
is bounded, the first term on the right-hand side goes to zero as
$t\rightarrow\infty$. To see that the last term also goes to zero as
$t\rightarrow\infty$ observe that
\[
\frac{1}{t^{2}}\mathbf{E}\big(M(t)^{2}\big)=\frac{1}{t^{2}}\mathbf{E}\langle
M\rangle(t)\leq\frac{K}{t}%
\]
for some $K>0$ independent of time since $\ \nabla\psi$ and $g$ are both
bounded on $\mathbf{T}^{d}$. More precisely, we have shown that for any
initial $x_{0}$
\begin{equation}
\mathbf{E}\Big(\frac{1}{T}\int_{0}^{T}\phi(X(s))ds-\bar{\phi}\Big)^{2}%
\leq\frac{K}{T}\,, \label{exampleE}%
\end{equation}
which is a quantitative version of what is often called the mean ergodic theorem.

We can view $\frac{1}{T}\int_{0}^{T}\phi(X(s))\,ds$ as an estimator for
$\bar{\phi}$ and it follows from (\ref{eq:LLNPeqn}) that
\begin{equation}
\mathrm{Bias}\left(  \frac{1}{T}\int_{0}^{T}\phi(X(s))\,ds\right)
\overset{\mbox{\tiny def}}{=}\mathbf{E}\Big(\frac{1}{T}\int_{0}^{T}%
\phi(X(s))ds-\bar{\phi}\Big)=O\left(  \frac{1}{T}\right)  . \label{eq:bias1}%
\end{equation}
One can also show that
\begin{equation}
\mathrm{Var}\left(  \frac{1}{T}\int_{0}^{T}\phi(X(s))\,ds\right)  =O\left(
\frac{1}{T}\right)  \ . \label{eq:var1}%
\end{equation}
The estimate (\ref{eq:var1}) easily follows from (\ref{exampleE}) and
(\ref{eq:bias1}) but can also be obtained directly from (\ref{eq:LLNPeqn})
using an additional mixing condition.

To obtain an almost sure (a.s.) result\footnote{This result can also be
obtained using Kronecker's lemma (see, e.g. \cite{Elliot01}). Though
the approach we follow gives explicit error estimates which can be useful. }, for any
$\varepsilon>0$ we introduce $A({T;}\varepsilon)=\{\frac{1}{T}\sup_{t\leq
T}|M(t)|>T^{\varepsilon-\frac{1}{2}}\}.$ By the Doob inequality for continuous
martingales and the fact that $\mathbf{E}|M(T)|^{2}\leq KT$, we get
\[
\mathbf{P}(A({T;}\varepsilon))\leq\frac{E|M(T)|^{2}}{T^{1+2\varepsilon}}%
\leq\frac{K}{T^{2\varepsilon}}\ .
\]
Hence
\[
\sum_{n\in\mathbf{N}}\mathbf{P}(A({2^{n};}\varepsilon))<\infty\;
\]
and the Borel-Cantelli lemma implies that there exists an a.s. bounded random
variable $C(\omega)>0$ and $n_{0}$ so that for every $n\in\mathbf{N}$ with
$n\geq n_{0}$ one has
\[
\frac{1}{2^{n}}\sup_{t\leq2^{n}}|M(t)|\leq\frac{C(\omega)}%
{2^{n(1/2-\varepsilon)}}\ .
\]
Hence with probability one, for every $t\in\lbrack2^{n},2^{n+1})$ with $n\geq
n_{0}$ one has
\[
\frac{1}{t}|M(t)|\leq\frac{1}{2^{n}}\sup_{2^{n}\leq s\leq2^{n+1}}%
|M(s)|\leq\frac{2}{2^{n+1}}\sup_{s\leq2^{n+1}}|M(s)|\leq\frac{2C(\omega
)}{2^{(n+1)(1/2-\varepsilon)}}\leq\frac{2C(\omega)}{t^{1/2-\varepsilon}}\ .
\]
By combining this estimate with \eqref{eq:LLNPeqn} and the fact that $\psi$ is
bounded, we see that for any $\varepsilon>0$ and for every $t\geq2^{n_{0}}$
one has
\begin{equation}
\Big|\frac{1}{T}\int_{0}^{T}\phi(X(s))\,ds-\bar{\phi}\Big|\leq\frac
{2|\psi|_{\infty}}{T}+\frac{C(\omega)}{T^{1/2-\varepsilon}}\qquad
\text{a.s.}\label{eq:ascon}%
\end{equation}
for some a.s. bounded $C(\omega)>0.$ We note that (\ref{eq:ascon}) implies
that for any initial $x_{0}$
\begin{equation}
\lim_{t\rightarrow\infty}\frac{1}{t}\int_{0}^{t}\phi(X(s))\,ds=\bar{\phi
}\qquad\text{a.s.}\;.\label{exampleAS}%
\end{equation}
In other words, the strong law of large number holds starting from any initial
$x_{0}$. Our assumptions are sufficient to ensure that \eqref{eq:sde} has a
unique stationary measure $\mu$. Hence, it follows from Birkoff's ergodic
theorem that \eqref{exampleAS} holds for $\mu$-a.e. initial $x_{0}$. The above
argument not only shows that the result holds for \emph{all} $x_{0},$ but also
gives quantitative estimates on the rate of convergence.

\section{Error analysis for the numerical time-average\label{sec:error}}

\subsection{First order accurate schemes}

\label{sec:Closeness} We now wish to generalize the calculations in the
previous section to prove the closeness of the time averages obtained via the Euler
approximation \eqref{eq:euler} to the stationary averages of \eqref{eq:sde}.
We consider sample path estimates in this section and then convert these to
distance estimates in an appropriate metric, in Section~\ref{sec:ClosenessII}.

Let $T=N\Delta.$ Introduce the estimator (discrete time-average) $\hat{\phi
}_{N}$ for the stationary average $\bar{\phi}$:
\begin{equation}
\hat{\phi}_{N}\overset{\mbox{\tiny def}}{=}\frac{1}{N}\sum_{n=0}^{N-1}%
\phi(X_{n})\ , \label{eq:estim}%
\end{equation}
where $X_{n}$ is the SDE approximation defined in \eqref{eq:euler}.

We start by proving a mean convergence result, followed by $L^{2}$ and almost
sure theorems.

\begin{theorem}
\label{thm:veryBasicConv} Let Assumptions~\ref{ass:standing} and
\ref{ass:eular} hold. Let $H$ denote $W^{2,\infty}$ in the elliptic setting
and $W^{4,\infty}$ in the hypoelliptic setting. Then for any $\phi\in H$, one
has
\begin{equation}
\Big|\mathbf{E}\hat{\phi}_{N}-\bar{\phi}\Big|\leq C\Big( \Delta+\frac{1}%
{T}\Big),
\end{equation}
where $T=N\Delta$ and $C$ is some positive constant independent of $\Delta$
and $T$. Furthermore, the constant $C$ is a linear function of $|\phi|_{H}$
and otherwise independent of $\phi$.\footnote{We indicate the dependence of
constants on $\phi$ since this dependence is used in
Section~\ref{sec:ClosenessII}. Obviously, all the constants appearing in the
statements of this and forthcoming theorems also depend on the coefficients of
the SDE (\ref{eq:sde}) and on the numerical method used.}

\end{theorem}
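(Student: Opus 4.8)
The plan is to mimic the continuous-time law of large numbers argument from Section~\ref{sec:LLN}, but applied to the numerical trajectory $\{X_n\}$ in place of the SDE path $X(t)$. Let $\psi$ solve the Poisson equation $\mathcal{L}\psi = \phi - \bar\phi$; by Theorem~\ref{thm:poison}, $\psi\in W^{4,\infty}$ (elliptic case) or $\psi \in W^{2+\delta,\infty}$ with enough regularity in the hypoelliptic case once $\phi$ is taken in $H$, and $|\psi|$ together with its relevant derivatives is bounded by a constant times $|\phi|_H$. The key identity is a discrete one-step expansion: from \eqref{eq:genApprox} applied to $\psi$ (which is legitimate because $\psi$ is smooth enough), we have $\mathbf{E}_x\big(\psi(X_1)-\psi(x)\big) = (\mathcal{L}\psi)(x)\Delta + R(x)$ where $|R(x)|\le C\Delta^2$, with $C$ proportional to a suitable norm of $\psi$, hence to $|\phi|_H$. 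Here I would need to be slightly careful that the $O(\Delta^2)$ remainder in \eqref{eq:genApprox} is genuinely uniform in $x$ and controlled by the norm of $\psi$ that Theorem~\ref{thm:poison} supplies — this requires checking the Taylor expansion underlying \eqref{eq:SDEexpand}, which uses the moment bounds on $\eta$ (hence the ``sufficiently large $r$'') and the boundedness of $F,G$ and of the derivatives of $\psi$ up to order $4$. This uniformity is the main technical obstacle.

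Granting the one-step estimate, the next step is to telescope. Define $M_n = \psi(X_n) - \psi(X_0) - \sum_{k=0}^{n-1}\mathbf{E}\big(\psi(X_{k+1})-\psi(X_k)\,\big|\,\mathcal{F}_{t_k}\big)$, which is a martingale with $M_0 = 0$. Substituting the one-step expansion and using $\mathcal{L}\psi = \phi-\bar\phi$ gives
\begin{equation}
\psi(X_N) - \psi(X_0) = \Delta\sum_{k=0}^{N-1}\big(\phi(X_k)-\bar\phi\big) + \sum_{k=0}^{N-1} R(X_k) + M_N\,.
\end{equation}
Dividing by $T = N\Delta$ and rearranging,
\begin{equation}
\hat\phi_N - \bar\phi = \frac{\psi(X_N)-\psi(X_0)}{T} - \frac{1}{T}\sum_{k=0}^{N-1} R(X_k) - \frac{M_N}{T}\,.
\end{equation}
Now take expectations: $\mathbf{E} M_N = 0$, the first term is bounded by $2|\psi|_\infty/T \le C|\phi|_H/T$, and the remainder sum is bounded by $N\cdot C\Delta^2 / T = C\Delta$ with $C$ proportional to $|\phi|_H$. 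This yields $|\mathbf{E}\hat\phi_N - \bar\phi| \le C(|\phi|_H)\,(\Delta + 1/T)$, which is exactly the claim, with the stated linear dependence on $|\phi|_H$.

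A couple of points need attention in the write-up. First, applying It\^o-type expansions at the discrete level requires $\psi$ to have continuous derivatives up to the order appearing in the local error expansion; in the hypoelliptic case $\psi$ gains only $\delta$ derivatives over $\phi$, so one must verify that $\phi\in W^{4,\infty}$ really does give $\psi$ enough regularity (e.g. $W^{2,\infty}$ or better, $C^2$) for the one-step Taylor estimate — this is why $H = W^{4,\infty}$ there rather than $W^{2,\infty}$. Second, all constants must be tracked as linear functionals of $|\phi|_H$: this follows because $\psi$ depends linearly on $\phi-\bar\phi$ through the (bounded) solution operator of the Poisson equation, so $\|\psi\|_{W^{k+2,\infty}}\le C\|\phi\|_{W^{k,\infty}}$, and the remainder $R$ depends linearly on the derivatives of $\psi$. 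With these observations in place, the proof is a short computation; the only genuinely delicate step is establishing the uniform $O(\Delta^2)$ one-step bound for $\psi$ with a constant controlled by $|\phi|_H$, which I would isolate as a preliminary lemma on the local weak error of \eqref{eq:euler} applied to functions in $W^{2,\infty}$ (or $C^4$, depending on how sharp one wants the hypotheses).
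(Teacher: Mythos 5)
Your argument is essentially the paper's own proof: the paper also solves $\mathcal{L}\psi=\phi-\bar\phi$, Taylor-expands $\psi(X_{n+1})$ about $X_n$ to produce the one-step identity $\mathbf{E}\big(\psi(X_{n+1})-\psi(X_n)\,\big|\,\mathcal{F}_{t_n}\big)=\Delta(\mathcal{L}\psi)(X_n)+O(\Delta^2)$ (with martingale fluctuations that vanish under $\mathbf{E}$), telescopes, and divides by $T$ — the only difference being that the paper carries out the fourth-order Taylor expansion explicitly (it needs the resulting decomposition into the terms $M_{i,N}$, $S_{i,N}$ for the later $L^2$ and almost-sure theorems), whereas you defer it to a local-error lemma that you correctly identify as the real work. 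One small correction: since the increment is only $O(\sqrt{\Delta})$, the uniform $O(\Delta^2)$ one-step bound genuinely requires $D^4\psi$ bounded (not merely $\psi\in C^2$ as your parenthetical suggests), which is exactly why $H=W^{4,\infty}$ in the hypoelliptic case and why the elliptic case exploits the two-derivative gain $\phi\in W^{2,\infty}\Rightarrow\psi\in W^{4,\infty}$.
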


\begin{proof}
Under either of the conditions from Assumption~\ref{ass:standing}, we know
from Theorem~\ref{thm:poison} that the solution $\psi$ of \eqref{eq:poss} is
in $W^{4,\infty}$. In the interest of clarity and definiteness, we will
proceed under the first condition in Assumption~\ref{ass:standing} (the
elliptic setting) where the $|D^{4}\psi|_{\infty}\leq C |D^{2}\phi|_{\infty}$,
$|D^{3}\psi|_{\infty}\leq C |D\phi|_{\infty}$, $|D^{2}\psi|_{\infty}\vee
|D\psi|_{\infty}\vee|\psi|_{\infty}\leq C |\phi|_{\infty}$. In the
hypoelliptic setting, the $k^{th}$ derivative of $\psi$ is bounded by the same
derivative of $\phi$, necessitating the change in definition of $H$ between
the elliptic and hypoelliptic cases in the statement of the theorem.

For brevity, we write $\phi_{n}=\phi(X_{n})$, $F_{n}=F(X_{n},\Delta)$,
$G_{n}=G(X_{n},\Delta)$, $\psi_{n}=\psi(X_{n})$ and $D^{k}\psi_{n}=(D^{k}%
\psi)(X_{n})$ where $(D^{k}\psi)(z)$ is the $k$th derivative. We write
$(D^{k}\psi)(z)[h_{1},\ldots,h_{k}]$ for the derivative evaluated in the
directions $h_{j}$. Defining $\bar{\delta}_{n}\overset{\mbox{\tiny def}}{=}
X_{n+1}-X_{n}=\Delta F_{n}+\sqrt{\Delta}G_{n}\eta_{n+1}$, we have
\begin{equation}
\psi_{n+1}=\psi_{n}+D\psi_{n}[\bar{\delta}_{n}]+\frac{1}{2}D^{2}\psi_{n}%
[\bar{\delta}_{n},\bar{\delta}_{n}]+\frac{1}{6}D^{3}\psi_{n}[\bar{\delta}%
_{n},\bar{\delta}_{n},\bar{\delta}_{n}]+R_{n+1}, \label{taylor}%
\end{equation}
where
\[
R_{n+1}=\Big(\frac{1}{6}\int_{0}^{1}s^{3}D^{4}\psi(sX_{n}+(1-s)X_{n+1}%
)\,ds\Big)[\bar{\delta}_{n},\bar{\delta}_{n},\bar{\delta}_{n},\bar{\delta}%
_{n}]
\]
is the remainder given by the Taylor theorem. Hence,
\begin{align}
\psi_{n+1}  &  =\psi_{n}+\Delta\mathcal{L}^{\Delta}\psi_{n}+\Delta^{\frac
{1}{2}}D\psi_{n}[G_{n}\eta_{n+1}]+\Delta^{\frac{3}{2}}D^{2}\psi_{n}%
[F_{n},G_{n}\eta_{n+1}]\label{taylorAgain}\\
&  +\frac{1}{2}\Delta^{2}D^{2}\psi_{n}[F_{n},F_{n}]+\frac{1}{6}D^{3}\psi
_{n}[\bar{\delta}_{n},\bar{\delta}_{n},\bar{\delta}_{n}]+r_{n+1}%
+R_{n+1},\nonumber
\end{align}
where
\[
r_{n+1}=\frac{\Delta}{2}\Big(D^{2}\psi_{n}[G_{n}\eta_{n+1},G_{n}\eta
_{n+1}]-A(x,\Delta)\colon\!\!\nabla\nabla\psi_{n}\Big)\,.
\]
Summing \eqref{taylorAgain} over the first $N$ terms, dividing by $N\Delta$
and using \eqref{eq:poss}, produces
\begin{equation}
\frac{1}{N\Delta}(\psi_{N}-\psi_{0})=\frac{1}{N}\sum_{n=0}^{N-1}(\phi_{n}%
-\bar{\phi})+\frac{1}{N}\sum_{n=0}^{N-1}(\mathcal{L}^{\Delta}-\mathcal{L}%
)\psi_{n}+\frac{1}{N\Delta}\sum_{i=1}^{3}\big(M_{i,N}+S_{i,N}\big),
\label{sumedUp}%
\end{equation}
where
\begin{align*}
M_{1,N}  &  =\sum_{n=0}^{N-1}r_{n+1},\ M_{2,N}={\Delta^{\frac{1}{2}}}%
\sum_{n=0}^{N-1}D\psi_{n}[G_{n}\eta_{n+1}],\ M_{3,N}={\Delta^{\frac{3}{2}}%
}\sum_{n=0}^{N-1}D^{2}\psi_{n}[F_{n},G_{n}\eta_{n+1}],\\
S_{1,N}  &  =\frac{\Delta^{2}}{2}\sum_{n=0}^{N-1}D^{2}\psi_{n}[F_{n}%
,F_{n}],\ \ \ S_{2,N}=\sum_{n=0}^{N-1}R_{n+1},\ \,S_{3,N}=\frac{1}{6}%
\sum_{n=0}^{N-1}D^{3}\psi_{n}[\bar{\delta}_{n},\bar{\delta}_{n},\bar{\delta
}_{n}]\ .
\end{align*}
We will find it convenient to further decompose
\[
S_{3,N}=M_{0,N}+S_{0,N},
\]
where
\[
M_{0,N}={\Delta^{\frac{3}{2}}}\sum_{n=0}^{N-1}\Bigl(D^{3}\psi_{n}[G_{n}%
\eta_{n+1},G_{n}\eta_{n+1},G_{n}\eta_{n+1}]+3\Delta D^{3}\psi_{n}[F_{n}%
,F_{n},G_{n}\eta_{n+1}]\Bigr)\,.
\]
Notice that $\mathbf{E}[r_{n+1}|\mathcal{F}_{t_{n}}]=0$ and $\mathbf{E}%
[\eta_{n+1}|\mathcal{F}_{t_{n}}]=0$ and $\mathbf{E}[\eta_{n+1,i}\ \eta
_{n+1,j}\ \eta_{n+1,k}|\mathcal{F}_{t_{n}}]=0$. Then it is not difficult to
see that $M_{i,k},$ $i=0,\ldots,3,$ are martingales with respect to
$\{\mathcal{F}_{t_{k}}\}$ and, in particular, $\mathbf{E}M_{i,k}=0$ for any
$k$.

Since $f$ and $g$ are uniformly bounded, \eqref{goodMethod} implies that
$F_{n}$ and $G_{n}$ are uniformly bounded in $n$. Recall that we also know
that $\psi$ and its first four derivatives are uniformly bounded. Hence the
$S_{i,N}$ are bounded as follows:
\begin{align}
|S_{1,N}|  &  \leq{\Delta^{2}}\sum_{n=0}^{N-1}C_{1}|\phi|_{\infty}=C_{1}%
|\phi|_{\infty}\Delta T,\ \ \mathbf{E}|S_{2,N}|\leq\sum_{n=0}^{N-1}%
\mathbf{E}|R_{n+1}|\leq C_{2}|D^{2}\phi|_{\infty}\Delta T,\label{eq:iest}\\
\mathbf{E}|S_{0,N}|  &  \leq{\Delta^{2}}\sum_{n=0}^{N-1}C_{0}|D\phi|_{\infty
}=C_{0}|D\phi|_{\infty}\Delta T\,,\nonumber
\end{align}
where the $C_{i}$ are positive constants which we have labeled for future
reference. Similarly, we have (cf. (\ref{eq:genapp})):
\[
\Big|\sum_{n=0}^{N-1}(\mathcal{L}^{\Delta}-\mathcal{L})\psi_{n}\Big|\leq
\sum_{n=0}^{N-1}C_{4}|\phi|_{\infty}\Delta=C_{4}|\phi|_{\infty}\Delta N\,.
\]
Notice that this bound and the above bound in $S_{1,N}$ are a.s. bounds with
deterministic constants. Lastly observe that $|\psi_{N}-\psi_{0}|\leq
2|\phi|_{\infty}$. Applying all of the preceding estimates to \eqref{sumedUp}
produces the quoted result.
\end{proof}

\bigskip

Theorem~\ref{thm:veryBasicConv} implies that (cf. (\ref{eq:bias1})):
\begin{equation}
\mathrm{Bias}(\hat{\phi}_{N})=O\Big( \Delta+\frac{1}{T}\Big) .
\label{eq:bias2}%
\end{equation}

We now consider an $L^{2}$ convergence result, related to the mean ergodic
theorem (\ref{exampleE}) in the continuous case.

\begin{theorem}
\label{thm:basicConv} In the setting of Theorem~\ref{thm:veryBasicConv}, for
any $\phi\in H$, one has
\begin{equation}
\mathbf{E}\left(  \hat{\phi}_{N}-\bar{\phi}\right)  ^{2}\leq C\Big( \Delta
^{2}+\frac{1}{T}\Big) ,
\end{equation}
where $T=N\Delta$ and $C$ is some positive constant independent of $\Delta$
and $T$. Furthermore, $C$ depends on $\phi$ only through $|\phi|_{H}$ and does
so linearly.
\end{theorem}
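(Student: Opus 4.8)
The plan is to work from the identity \eqref{sumedUp} established in the proof of Theorem~\ref{thm:veryBasicConv}. Rearranging that display, we may write
\[
\hat{\phi}_{N}-\bar{\phi}=\frac{1}{N\Delta}(\psi_{N}-\psi_{0})-\frac{1}{N}\sum_{n=0}^{N-1}(\mathcal{L}^{\Delta}-\mathcal{L})\psi_{n}-\frac{1}{N\Delta}\sum_{i=1}^{3}\big(M_{i,N}+S_{i,N}\big),
\]
where recall $S_{3,N}=M_{0,N}+S_{0,N}$. Since $(a_{1}+\cdots+a_{k})^{2}\leq k(a_{1}^{2}+\cdots+a_{k}^{2})$, it suffices to bound the second moment of each of the seven contributions separately. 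Three of these — the boundary term $(\psi_{N}-\psi_{0})/(N\Delta)$, the operator-defect term, and the $S_{i,N}$ terms — were already shown in the proof of Theorem~\ref{thm:veryBasicConv} to be bounded almost surely (or in $L^{1}$) by deterministic quantities of size $O(\Delta+1/T)$ or better; for $S_{2,N}$ one needs an $L^{2}$ rather than $L^{1}$ bound on $\sum R_{n+1}$, but since each $R_{n+1}$ involves $\bar\delta_{n}^{\otimes 4}$ and $\bar\delta_{n}=\Delta F_{n}+\sqrt{\Delta}G_{n}\eta_{n+1}$ with $\eta$ having finite moments of all orders, one gets $\mathbf{E}|R_{n+1}|^{2}\leq C\Delta^{4}$ and hence $\mathbf{E}(S_{2,N}/(N\Delta))^{2}\leq C\Delta^{2}$ directly. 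So the only genuinely new work is controlling the martingale terms $M_{i,N}$, $i=0,1,2,3$.

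For each $i$, $M_{i,N}$ is a mean-zero $\{\mathcal{F}_{t_{n}}\}$-martingale, so its increments are orthogonal in $L^{2}$ and $\mathbf{E}M_{i,N}^{2}=\sum_{n=0}^{N-1}\mathbf{E}(\text{increment}_{n})^{2}$. Each increment carries an explicit power of $\Delta$: $M_{2,N}$ has increments of order $\sqrt{\Delta}$, so $\mathbf{E}M_{2,N}^{2}\leq C\,\Delta N=CT$; $M_{3,N}$ and $M_{0,N}$ have increments of order $\Delta^{3/2}$, so their second moments are $O(\Delta^{3}N)=O(\Delta^{2}T)$; and $M_{1,N}=\sum r_{n+1}$ has increments of order $\Delta$, giving $\mathbf{E}M_{1,N}^{2}\leq C\Delta^{2}N=C\Delta T$. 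Dividing by $(N\Delta)^{2}=T^{2}$, the dominant martingale contribution is $\mathbf{E}(M_{2,N}/(N\Delta))^{2}\leq C/T$, and all the others are $O(1/T)$ or smaller. In carrying this out I would use boundedness of $\psi$ and its first four derivatives (from Theorem~\ref{thm:poison}), boundedness of $F_{n},G_{n}$ (from Assumption~\ref{ass:eular} plus boundedness of $f,g$), and the finite-moment hypothesis on $\eta$ to bound each conditional second moment by a constant times the appropriate power of $\Delta$; the dependence on $\phi$ stays linear in $|\phi|_{H}$ because every bound is linear in a derivative of $\psi$, hence (elliptically, via the Schauder-type estimates quoted) linear in $|\phi|_{H}$.

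Collecting the seven pieces: the boundary, operator-defect, $S_{1,N}$, $S_{0,N}$, $S_{2,N}$, $M_{1,N}$, $M_{3,N}$, $M_{0,N}$ terms all contribute $O(\Delta^{2})$ to $\mathbf{E}(\hat\phi_{N}-\bar\phi)^{2}$ after squaring and dividing by $T^{2}$ (for the boundary term, $(2|\phi|_\infty/(N\Delta))^2=O(1/T^2)\leq O(\Delta^2)$ is not quite automatic — actually $1/T^{2}$ need not be $\leq\Delta^{2}$, so this term and the operator-defect term together contribute $O(\Delta^{2}+1/T^{2})\subseteq O(\Delta^{2}+1/T)$), while $M_{2,N}$ contributes the leading $O(1/T)$. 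Hence $\mathbf{E}(\hat\phi_{N}-\bar\phi)^{2}\leq C(\Delta^{2}+1/T)$ as claimed. The main obstacle, such as it is, is bookkeeping: one must be careful that the $1/(N\Delta)$ normalization applied to the martingale sums produces $1/T$ and not something worse, and that the $L^{2}$ (rather than $L^{1}$) estimate on $S_{2,N}$ genuinely needs only finitely many moments of $\eta$ — which it does, since $\bar\delta_{n}^{\otimes 4}$ involves at most $\eta^{8}$. No new analytic input beyond Theorems~\ref{thm:poison} and \ref{thm:veryBasicConv} and the martingale orthogonality is required.
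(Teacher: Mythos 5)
Your proposal is correct and follows essentially the same route as the paper's own proof: both start from the decomposition \eqref{sumedUp}, square it, bound the deterministic terms $S_{i,N}$ and the operator-defect term by $O(\Delta^{2}T^{2})$ in $L^{2}$, and use martingale orthogonality of increments to get $\mathbf{E}M_{2,N}^{2}\leq CT$ (the dominant $1/T$ contribution) with the remaining $M_{i,N}$ of lower order. Your extra remarks on the $L^{2}$ bound for $S_{2,N}$ and on $1/T^{2}$ versus $\Delta^{2}$ are consistent with what the paper does implicitly.
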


\begin{proof}
As in the proof of Theorem~\ref{thm:veryBasicConv}, we provide details in the
elliptic case; in the hypoelliptic case the only change is that higher
derivatives of $\phi$ appear in the constants. We begin with \eqref{sumedUp}
from the proof of Theorem~\ref{thm:veryBasicConv} and obtain
\begin{align}
\mathbf{E}\Big( \frac{1}{N}\sum_{n=0}^{N-1}(\phi_{n}-\bar{\phi})\Big) ^{2}  &
\leq C\mathbf{E}\bigg\{ \frac{(\psi_{N}-\psi_{0})^{2}}{T^{2}}+\frac{1}{N^{2}%
}\Big( \sum_{n=0}^{N-1}(\mathcal{L}^{\Delta}-\mathcal{L})\psi_{n}%
\Big) ^{2}\label{eq:squar}\\
&  +\frac{1}{T^{2}}\sum_{i=0}^{2}S_{i,N}^{2}+\frac{1}{T^{2}}\sum_{i=0}%
^{3}M_{i,N}^{2}\bigg\} \ ,\nonumber
\end{align}
where $C$ is an ever changing constant. Observe that it follows from
(\ref{eq:iest}) that $|S_{1,N}|^{2}\leq C_{1}^{2}|\phi|_{\infty}^{2}\Delta
^{2}T^{2}.$ Further, by reasoning similar to that used in getting
(\ref{eq:iest}) we have
\begin{align*}
\mathbf{E}\,S_{2,N}^{2}  &  \leq\sum_{n,k=1}^{N}\mathbf{E}|R_{n}||R_{k}|\leq
C|D^{2}\phi|_{\infty}^{2}\Delta^{2}T^{2}\ ,\ \\
\mathbf{E}\,S_{0,N}^{2}  &  \leq{\Delta^{4}}\sum_{k,n=0}^{N-1}C|D\phi
|_{\infty}^{2}\!=C|D\phi|_{\infty}^{2}\Delta^{2}T^{2}\ .
\end{align*}
Since $M_{1,N}$ is a martingale, we get
\[
\mathbf{E}\,M_{1,N}^{2}=\sum_{n=0}^{N-1}\mathbf{E}\,r_{n+1}^{2}\leq\sum
_{n=0}^{N-1}C|\phi|_{\infty}^{2}\Delta^{2}\leq C|\phi|_{\infty}^{2}\Delta
^{2}N=C|\phi|_{\infty}^{2}\Delta T\;.
\]
Similar reasoning and the bound on $\mathbf{E}\eta_{i,n}^{8}$ give
\begin{align*}
\mathbf{E}\,M_{2,N}^{2}  &  \leq{\Delta}\sum_{n=0}^{N-1}C|\phi|_{\infty}%
^{2}\!=C|\phi|_{\infty}^{2}T,\ \ \ \ \ \ \mathbf{E}\,M_{3,N}^{2}\leq
{\Delta^{3}}\sum_{n=0}^{N-1}C|\phi|_{\infty}^{2}\!=C|\phi|_{\infty}^{2}%
\Delta^{2}T,\\
\mathbf{E}\,M_{0,N}^{2}  &  \leq{\Delta^{3}}\sum_{n=0}^{N-1}C|D\phi|_{\infty
}^{2}\!=C|D\phi|_{\infty}^{2}{\Delta^{2}}T\;.
\end{align*}
Using the bounds on $|\sum_{n=0}^{N-1}(\mathcal{L}^{\Delta}-\mathcal{L}%
)\psi_{n}|$ and $|\psi_{N}-\psi_{0}|$ from the proof of
Theorem~\ref{thm:veryBasicConv} and all the above inequalities, we estimate
the right-hand side of (\ref{eq:squar}) and arrive at the quoted result.
\end{proof}

\bigskip

We now prove an almost sure version of the preceding two results. Its relation
to Theorem~\ref{thm:basicConv} is the same as the relation of \eqref{eq:ascon}
to \eqref{exampleE}.

\begin{theorem}
\label{thm:moreConv}In the setting of Theorem~\ref{thm:veryBasicConv}, fixing
an $L>0$ there exists a deterministic constant $K$ (depending lineally on $L$)
so that for all $\phi$ with $\Vert\phi\Vert_{H}\leq L$, $\Delta$ sufficiently
small, positive $\varepsilon>0$, and $T$ sufficiently large one has:
\begin{equation}
\Big\vert \hat{\phi}_{N}-\bar{\phi}\Big\vert \leq K\Delta+\frac{C(\omega
)}{T^{1/2-\varepsilon}}\ \ \ \ \text{a.s.,} \label{eq:mm}%
\end{equation}
where $T=N\Delta$ and $C(\omega)>0$ is an a.s. bounded random variable
depending on $\varepsilon$ and the particular $\phi$.
\end{theorem}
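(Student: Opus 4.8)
The plan is to start from the identity \eqref{sumedUp} obtained in the proof of Theorem~\ref{thm:veryBasicConv}, which expresses $\hat\phi_N-\bar\phi$ as a sum of three types of terms: the boundary term $(\psi_N-\psi_0)/(N\Delta)$, the generator-mismatch term $\frac1N\sum(\mathcal L^\Delta-\mathcal L)\psi_n$, the ``small'' remainders $\frac{1}{N\Delta}\sum S_{i,N}$, and the martingale terms $\frac{1}{N\Delta}\sum M_{i,N}$. The first term is bounded by $2|\phi|_\infty/T$ deterministically; the generator-mismatch term is bounded by $C_4|\phi|_\infty\Delta$ deterministically using \eqref{eq:genapp}; the $S_{i,N}$ terms carry a deterministic prefactor $\Delta T$ (for $S_{1,N}$) or, for $S_{0,N}$ and $S_{2,N}$, admit the moment bounds already recorded, which after division by $N\Delta=T$ leave terms of size $O(\Delta)$ in an almost-sure or $L^1$ sense. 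So all non-martingale contributions are already of the claimed form $K\Delta + o(1)$.

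The heart of the argument is therefore the a.s. control of the martingale part $\frac1T\sum_{i=0}^{3}M_{i,N}$, and here I would mimic exactly the Borel--Cantelli argument used for the continuous-time case in \eqref{eq:ascon}. For each $i$, $M_{i,N}$ is a discrete-time $\{\mathcal F_{t_n}\}$-martingale whose second moment was estimated in the proof of Theorem~\ref{thm:basicConv}: $\mathbf E M_{1,N}^2\le C|\phi|_\infty^2\Delta T$, $\mathbf E M_{2,N}^2\le C|\phi|_\infty^2 T$, $\mathbf E M_{3,N}^2\le C|\phi|_\infty^2\Delta^2 T$, $\mathbf E M_{0,N}^2\le C|D\phi|_\infty^2\Delta^2 T$. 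After dividing by $T^2$ the dominant one is $M_{2,N}$, giving $\mathbf E(M_{2,N}/T)^2\le C|\phi|_\infty^2/T$, so the total martingale term has second moment $O(1/T)$. Now define, along the dyadic-in-time sequence $T_j=2^j\Delta$ (equivalently $N=2^j$ steps), the events $A(j;\varepsilon)=\{\frac1{T_j}\max_{k\le 2^j}|\sum_{i=0}^{3}M_{i,k}|> T_j^{\varepsilon-1/2}\}$. By Doob's maximal inequality for discrete martingales, $\mathbf P(A(j;\varepsilon))\le \mathbf E|\sum_i M_{i,2^j}|^2/T_j^{1+2\varepsilon}\le C/T_j^{2\varepsilon}= C2^{-2j\varepsilon}\Delta^{-2\varepsilon}$, which is summable in $j$. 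Borel--Cantelli then yields an a.s.\ finite random variable $C(\omega)$ (depending on $\varepsilon$ and $\phi$) with $\frac1{T_j}\max_{k\le 2^j}|\sum_i M_{i,k}|\le C(\omega)/T_j^{1/2-\varepsilon}$ for all large $j$, and interpolating between consecutive dyadic times exactly as in the display preceding \eqref{eq:ascon} upgrades this to $\frac1T|\sum_i M_{i,N}|\le 2C(\omega)/T^{1/2-\varepsilon}$ for all $T$ large.

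Combining this martingale bound with the deterministic bounds on the boundary and generator-mismatch terms, and the a.s./moment bounds on the $S_{i,N}$ terms (for $S_{2,N}$ and $S_{0,N}$ one should note that $\sum_n\mathbf E|R_{n+1}|<\infty$ type control is not directly a.s., so I would instead bound $\sum_n|R_{n+1}|$ pathwise using that $|D^4\psi|_\infty$ and $F_n,G_n$ are bounded while $\mathbf E|\eta_{n+1}|^4$ is finite, which gives $\frac1T S_{2,N}=O(\Delta)$ plus a martingale-type fluctuation that is again handled by the same Borel--Cantelli device, or more simply absorbs into $C(\omega)T^{-1/2+\varepsilon}$ after centering), one obtains \eqref{eq:mm} with $K$ a linear function of $L\ge\|\phi\|_H$ coming from the explicit dependence of $C_1,C_4,|\psi|_\infty$ etc.\ on $|\phi|_H$. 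The main obstacle I anticipate is the careful handling of the $S_{0,N}$ and $S_{2,N}$ terms: unlike the $M_{i,N}$ they are not martingales, so to get a genuine almost-sure statement (rather than merely an $L^1$ or $L^2$ one) I need either a pathwise bound exploiting boundedness of the integrand and finiteness of high moments of $\eta$ together with a separate Borel--Cantelli step for their centered parts, or to split each $S_{i,N}$ into its conditional-expectation part (deterministically $O(\Delta T)$) plus a martingale part whose second moment is $O(\Delta^2 T)$ and which is absorbed into $C(\omega)/T^{1/2-\varepsilon}$. Everything else is a direct discretization of the continuous-time Section~\ref{sec:LLN} argument.
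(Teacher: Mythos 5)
Your proposal is correct and follows the paper's overall skeleton (start from \eqref{sumedUp}, bound the boundary and generator-mismatch terms deterministically, handle $S_{0,N}$ and $S_{2,N}$ pathwise via the boundedness of $\psi$'s derivatives and the a.s.\ behaviour of $\frac1N\sum_n|\eta_{n+1}|^4$, then run Borel--Cantelli on the martingale part), but the key martingale step is done by a genuinely different device. You control $\frac1T\sum_i M_{i,N}$ using only \emph{second} moments together with Doob's maximal inequality along the dyadic times $T_j=2^j\Delta$, followed by interpolation between consecutive dyadic blocks --- i.e.\ a verbatim discretization of the continuous-time argument leading to \eqref{eq:ascon}. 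The paper instead avoids the maximal inequality entirely: it establishes $2r$-th moment bounds $\mathbf{E}|M_{i,N}|^{2r}\leq CN^{r}$ via the recursion \eqref{eq:m1a}--\eqref{eq:m1c} (using Young's inequality and the vanishing of the conditional cross term), deduces $\mathbf{E}\Theta_N^{2r}\leq K/T^{r}$, and then applies Markov's inequality and Borel--Cantelli summing over \emph{all} $N$, with $r$ chosen large enough (depending on $\varepsilon$) to make $\sum_N T^{r(2\gamma-1)}$ converge. Your route is shorter and needs only finitely many moments of $\eta$ (roughly $\mathbf{E}|\eta|^{6}<\infty$ for the second moment of $M_{0,N}$), independent of $\varepsilon$; the paper's route requires moments of $\eta$ of order growing as $\varepsilon\downarrow 0$, but dispenses with the dyadic blocking and yields the clean statement that $\sup_{T>0}T^{\gamma}\Theta_N$ is a.s.\ finite directly. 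Your handling of $S_{0,N}$, $S_{2,N}$ is slightly more elaborate than necessary: the paper simply notes that the strong law of large numbers makes $\frac1N\sum_n|\eta_{n+1}|^4$ eventually bounded by a deterministic constant, so that $\frac1T|S_{2,N}|\leq K\Delta$ a.s.\ without any centering or additional Borel--Cantelli step; your first proposed option amounts to the same thing.
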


\begin{proof}
As in the proof of Theorem~\ref{thm:veryBasicConv}, we provide details in the
elliptic case; in the hypoelliptic case the only change is that higher
derivatives of $\phi$ appear in the constants. Starting from \eqref{sumedUp},
we have
\[
\Big\vert\frac{1}{N}\sum_{n=0}^{N-1}\phi_{n}-\bar{\phi}\Big\vert\leq
\frac{\left\vert \psi_{N}-\psi_{0}\right\vert }{T}+\frac{1}{N}\sum_{n=0}%
^{N-1}\left\vert (\mathcal{L}^{\Delta}-\mathcal{L})\psi_{n}\right\vert
+\frac{1}{T}\sum_{i=0}^{2}\left\vert S_{i,N}\right\vert +\frac{1}{T}\sum
_{i=0}^{3}|M_{i,N}|.
\]
Recall that (see the proof of Theorem~\ref{thm:veryBasicConv})%
\begin{align*}
\left\vert \psi_{N}-\psi_{0}\right\vert  &  \leq2|\phi|_{\infty}%
,\ \ \sum_{n=0}^{N-1}\left\vert (\mathcal{L}^{\Delta}-\mathcal{L})\psi
_{n}\right\vert \leq C_{4}|\phi|_{\infty}\ \Delta N,\ \ \left\vert
S_{1,N}\right\vert \leq C_{1}|\phi|_{\infty}\ \Delta T,\ \\
\left\vert S_{2,N}\right\vert  &  \leq K\Delta^{2}\sum_{n=0}^{N-1}\left\vert
\eta_{n+1}\right\vert ^{4}+K\Delta^{4}N,\ \ \left\vert S_{0,N}\right\vert \leq
K\Delta^{2}\sum_{n=0}^{N-1}\left\vert \eta_{n+1}\right\vert ^{2}+K\Delta
^{3}N,\
\end{align*}
where $K$ is an ever changing positive deterministic constant, independent of
$\Delta$ and $N.$
Due to the strong law of large numbers, the sum $\frac{1}{N}\sum_{n=0}%
^{N-1}\left\vert \eta_{n+1}\right\vert ^{4}$ a.s. converges\footnote{Note that
in the case of $\eta_{n,i}$ from (\ref{eq:dis}) this sum is equal to $m^{2}$.}
to $\mathbf{E}\left\vert \eta_{1}\right\vert ^{4}$ and thus for almost every
sequence $\eta_{1},$ $\eta_{2},$ $\ldots$ and all sufficiently large $N$ we
have $\frac{1}{N}\sum_{n=0}^{N-1}\left\vert \eta_{n+1}\right\vert ^{4}<K$ for
some deterministic $K>0.$ Hence
\begin{equation}
\Big\vert\frac{1}{N}\sum_{n=0}^{N-1}\phi_{n}-\bar{\phi}\Big\vert\leq
K\Big(\Delta+\frac{1}{T}\Big)+\Theta_{N}\ \ \ \ \ \ \text{a.s. },
\label{eq:sta}%
\end{equation}
where $\Theta_{N}\overset{\mbox{\tiny def}}{=}\frac{1}{T}\sum_{i=0}%
^{3}|M_{i,N}|.$

Now we analyze $\Theta_{N}.$ We have for $r\geq1$%
\[
\mathbf{E}\Theta_{N}^{2r}\leq\frac{K}{T^{2r}}\sum_{i=0}^{3}\mathbf{E}%
|M_{i,N}|^{2r}%
\]
(here $K$ depends on $r$). Recall that $M_{i,k}$ are martingales. If we can
prove that $\mathbf{E}|M_{i,N}|^{2r}\leq CN^{r}$ for all $r$ sufficiently
large then the proof can be completed with the aid of the Borel-Cantelli lemma
as in Section~\ref{sec:LLN}.

We will the provide argument for estimating $\mathbf{E}|M_{2,N}|^{2r},$ the
other terms are estimated analogously. We re-write
\[
M_{2,N}=\sqrt{\Delta}\sum_{n=0}^{N-1}D\psi_{n}[G_{n}\eta_{n+1}%
]\overset{\mbox{\tiny def}}{=}\sqrt{\Delta}\tilde{M}_{2,N}\ .
\]
We have $\tilde{M}_{2,1}=0$ and for an integer $r>0:$
\begin{align}
\mathbf{E}\tilde{M}_{2,k+1}^{2r}  &  =\mathbf{E}\left(  \tilde{M}_{2,k}%
+D\psi_{k}[G_{k}\eta_{k+1}]\right)  ^{2r}\label{eq:m1a}\\
&  =\mathbf{E}\left(  \tilde{M}_{2,k}^{2}+2\tilde{M}_{2,k}\ D\psi_{k}%
[G_{k}\eta_{k+1}]+\left(  D\psi_{k}[G_{k}\eta_{k+1}]\right)  ^{2}\right)
^{r}\nonumber\\
&  \leq\mathbf{E}\tilde{M}_{2,k}^{2r}+2r\mathbf{E}\left(  \tilde{M}%
_{2,k}^{2r-1}\ D\psi_{k}[G_{k}\eta_{k+1}]\right) \nonumber\\
&  +K\sum_{l=2}^{2r}\mathbf{E}\left(  |\tilde{M}_{2,k}|^{2r-l}|D\psi_{k}%
[G_{k}\eta_{k+1}]|^{l}\right)  .\nonumber
\end{align}
Note that the second term is equal to zero. Indeed
\begin{equation}
\mathbf{E}\left(  \tilde{M}_{2,k}^{2r-1}D\psi_{k}[G_{k}\eta_{k+1}]\right)
=\mathbf{E}\left(  \tilde{M}_{2,k}^{2r-1}\mathbf{E}\left(  D\psi_{k}[G_{k}%
\eta_{k+1}]\ |\mathcal{F}_{t_{k}}\right)  \right)  =0\ . \label{eq:m1b}%
\end{equation}
Using the elementary inequality
\[
ab=\frac{1}{N}\left(  abN\right)  \leq\frac{1}{N}\left(  \dfrac{a^{p}}%
{p}+\dfrac{b^{q}N^{q}}{q}\right)  ,\ a,b>0,\ \ p,q>1,\ \dfrac{1}{p}+\dfrac
{1}{q}=1,
\]
we get
\begin{align}
\mathbf{E}\left(  |\tilde{M}_{2,k}|^{2r-l}|D\psi_{k}[G_{k}\eta_{k+1}%
]|^{l}\right)   &  \leq\frac{1}{N}\mathbf{E}\left(  \frac{2r-l}{2r}|\tilde
{M}_{2,k}|^{2r}+\frac{l}{2r}|D\psi_{k}[G_{k}\eta_{k+1}]|^{2r}N^{2r/l}\right)
,\label{eq:m1c}\\
l  &  =2,\ldots,2r\ .\nonumber
\end{align}
The relations (\ref{eq:m1a})-(\ref{eq:m1c}) imply
\[
\mathbf{E}\tilde{M}_{2,k+1}^{2r}\leq\mathbf{E}\tilde{M}_{2,k}^{2r}(1+\frac
{K}{N})+KN^{r-1}\ ,
\]
whence
\[
\mathbf{E}\tilde{M}_{2,N}^{2r}\leq KN^{r}\ .
\]
Note that by Jensen's inequality this inequality holds for non-integer
$r\geq1$ as well. Therefore,
\[
\frac{1}{T^{2r}}\mathbf{E}\left\vert M_{2,N}\right\vert ^{2r}=\frac{\Delta
^{r}}{T^{2r}}\mathbf{E}\left\vert \tilde{M}_{2,N}\right\vert ^{2r}\leq\frac
{K}{T^{r}}\ .
\]
Analogously, we obtain
\[
\frac{1}{T^{2r}}\mathbf{E}\left\vert M_{1,N}\right\vert ^{2r}\leq\Delta
^{r}\frac{K}{T^{r}},\ \ \frac{1}{T^{2r}}\mathbf{E}\left\vert M_{3,N}%
\right\vert ^{2r}\leq\Delta^{2r}\frac{K}{T^{r}},\ \ \frac{1}{T^{2r}}%
\mathbf{E}\left\vert M_{0,N}\right\vert ^{2r}\leq\Delta^{2r}\frac{K}{T^{r}%
}\ .
\]
Thus,
\begin{equation}
\mathbf{E}\Theta_{N}^{2r}\leq\frac{K}{T^{r}}. \label{eq:the}%
\end{equation}
The Markov inequality together with (\ref{eq:the}) implies
\[
P\Big(\Theta_{N}>\frac{1}{\Delta^{\gamma}N^{\gamma}}\Big)\leq\Delta^{2r\gamma
}N^{2r\gamma} (\mathbf{E}\Theta_{N}^{2r})\leq KT^{r(2\gamma-1)}.
\]
Then for any $\gamma=1/2-\varepsilon$, with $\varepsilon>0$, there is a
sufficiently large $r\geq1$ such that (recall that $\Delta$ is fixed here and
$T=\Delta N)$
\[
\sum_{N=1}^{\infty}P\left(  \Theta_{N}>\frac{1}{T^{\gamma}}\right)  \leq
K\sum_{N=1}^{\infty}T^{r(2\gamma-1)}<\infty.
\]
Hence, by the Borel-Cantelli lemma, the random variable $\varsigma
\overset{\mbox{\tiny def}}{=}\sup_{T>0}T^{\gamma}\Theta_{N}\ $ is a.s. finite
which together with (\ref{eq:sta}) implies (\ref{eq:mm}).
\end{proof}

We note that Theorems~\ref{thm:veryBasicConv}-\ref{thm:moreConv} do not
require the Markov chain $X_{n}$ to be ergodic. It is, of course, possible
under some additional conditions on the numerical method to prove its
ergodicity (see for example
\cite{Talay90,TalayHam02,MattinglyStuartHighamSDENUM02}). If the limit
$\lim_{N\rightarrow\infty}\hat{\phi}_{N}$ exists and is independent of $X_{0}$
(i.e., if the Markov chain $X_{n}$ is ergodic) then it follows from
Theorem~\ref{thm:moreConv} that $\lim_{N\rightarrow\infty}\big\vert\hat{\phi
}_{N}-\bar{\phi}\big\vert\leq K\Delta\ \ $a.s., which is consistent with the
results of, for example, \cite{Talay90}.

\begin{remark}
In the series of papers
\cite{LambertonPages02,LambertonPages03RCIM,Lem05,Lem07,Pan08a,Pan08b} the
authors consider the following weighted estimator for the stationary average
$\bar{\phi}:$
\[
\tilde{\phi}_{N}=\frac{\sum_{n=1}^{N}w_{n}\phi(X_{n})}{\sum_{n=1}^{N}w_{n}}\ ,
\]
where $X_{n}$ are obtained by an Euler-type scheme with a decreasing time step
$\Delta_{n}$ such that $\Delta_{n}\rightarrow0$ as $n\rightarrow\infty$ while
positive weights $w_{n}$ are such that $\sum_{n=1}^{N}w_{n}\rightarrow\infty$
as $N\rightarrow\infty.$ In particular, they proved that for some Euler-type
schemes and $w_{n}=\Delta_{n}=\Delta_{0}n^{-\alpha},$ $\alpha\in
\lbrack1/3,1),$
\begin{align*}
\mathrm{Bias}\left(  \tilde{\phi}_{N}\right)   &  =\left\{
\begin{array}
[c]{cc}%
0, & \alpha\in(1/3,1)\\
O(1/N^{1/3}), & \alpha=1/3,
\end{array}
\right.  \\
\mathrm{Var}\left(  \tilde{\phi}_{N}\right)   &  =O\left(  \frac
{1}{N^{1-\alpha}}\right)  \ ,
\end{align*}
i.e., roughly speaking, the error of the estimator $\tilde{\phi}_{N}$ under
the optimal choice of the parameters is
\[
\tilde{\phi}_{N}-\bar{\phi}\sim O\left(  \frac{1}{N^{1/3}}\right)  \ .
\]
We also note that the authors of \cite{LambertonPages02,LambertonPages03RCIM}
proved a.s. convergence of weighted empirical measures based on Euler-type
schemes with decreasing step to the invariant measure of the corresponding SDE
exploiting the Echeverria-Weiss theorem, which differs from the approach used
in this paper.

Let us briefly compare the estimators $\tilde{\phi}_{N}$ and $\hat{\phi}_{N}$.
For the estimator $\hat{\phi}_{N}$ from (\ref{eq:estim}) with equal weights
$w_{n}=1$ and with $X_{n}$ obtained by an Euler-type scheme with constant time
step $\Delta$, we can say that the error is (cf. Theorems~\ref{thm:basicConv}
and~\ref{thm:moreConv}):
\begin{equation}
\hat{\phi}_{N}-\bar{\phi}\sim O\left(  \Delta+\frac{1}{N^{1/2}\Delta^{1/2}%
}\right)  \ .\label{eq:rough}%
\end{equation}
If we fix the computational costs (i.e., $N$ and an Euler-type scheme) then
the asymptotically optimal choice of $\Delta$ for $\hat{\phi}_{N}$ is
$N^{-1/3}$ in which case $O\left(  \Delta+1/N^{1/2}\Delta^{1/2}\right)
=O\left(  1/N^{1/3}\right)  .$ It is interesting to note that these optimal
orders of errors of both estimators are the same. At the same time, we should
emphasize that the term with $\Delta$ in (\ref{eq:rough}) (see also $K\Delta$
(\ref{eq:bias2}) and (\ref{eq:mm})) is related to the numerical integration
error while the term with $1/\sqrt{T}=1/N^{1/2}\Delta^{1/2}$ is related to the
statistical error (see also Section~\ref{sec:practice}). In the case of large
scale simulations (like those in molecular dynamics) the numerical error is
usually relatively small thanks to the existing state of the art numerical
integrators and the statistical error prevails. In such common in practice
situations one chooses $\Delta$ and $N$ to appropriately control the
corresponding errors (see further discussion in Section~\ref{sec:practice}).
\end{remark}

\subsection{Higher order schemes\label{sec:high}}

We now consider a more general approximation than \eqref{eq:euler}. In
addition to the assumptions made in Section~\ref{sec:sde} we assume in this
section that the coefficients of the SDE \eqref{eq:sde} and the function
$\phi$ are sufficiently smooth.

Given a function $\bar{\delta}:\mathbf{T}^{d}\times(0,1)\times\mathbf{R}%
^{m}\rightarrow\mathbf{T}^{d}$ and a sequence of $\mathbf{R}^{m}$-valued
i.i.d. random variables $\{\xi_{n}=(\xi_{n,1},\ldots,\xi_{n,m}):n\in
\mathbf{N}\}$, we define a general numerical method
\begin{equation}
\left\{
\begin{aligned} X_{n+1} &= X_n + \bar \delta(X_n,\Delta,\xi_{n+1})\, , \\ X_0&=x\, . \end{aligned}\right.
\label{eq:generalMethod}%
\end{equation}
We assume that the $\xi_{n}$ have sufficiently high moments finite. Clearly
our previous class of methods fits in to this framework as well as a number of
new methods such as implicit Euler. Guided by (\ref{locer}) we make the
following general assumption about \eqref{eq:generalMethod} after which we
will state some easier to verify conditions which are equivalent.

\vspace{1ex}

\begin{assumption}
\label{a:localError2} For all $\Delta\in(0,1)$ sufficiently small, and all
$\phi\in W^{2(p+1),\infty}$ if $X_{0}=X(0)$ then
\begin{equation}
\mathbf{E}\,\phi(X_{1})-\mathbf{E}\,\phi(X(\Delta))=O(\Delta^{p+1}),
\end{equation}
where the constant in the error term is uniform over all $\phi$ with
$\Vert\phi\Vert_{W^{2(p+1),\infty}}\leq1$.
\end{assumption}

\vspace{1ex}

To complement the increments of the numerical method $\bar{\delta}$ defined
above we now define the $\delta$ increments of the SDE. Namely for any
$x\in\mathbf{T}^{d}$, we define $\delta:\mathbf{T}^{d}\times(0,1)\times
\Omega\rightarrow\mathbf{T}^{d}$ by
\begin{equation}
\delta(x,\Delta;\omega)\overset{\mbox{\tiny def}}{=}X(\Delta;\omega
)-x,\label{eq:delta}%
\end{equation}
where $X(0;\omega)=x$ and $X(t;\omega)$ solves \eqref{eq:sde}. The following
proposition, whose proof is given at the end of the section, enables
Assumption \ref{a:localError2} to be verified.

\begin{proposition}
\label{prop:weakOrderConditions} Assume that for some $p\in\mathbf{N}$, there
exists a positive constant $K$ so that for all $\Delta\in(0,1)$ sufficiently
small:
\begin{gather}
\sup_{\substack{(\alpha_{1},\dots,\alpha_{s})\\\alpha_{i}\in\{1,\dots
,d\}}}\Big|\mathbf{E}\big(\prod_{i=1}^{s}\delta_{\alpha_{i}}-\prod_{i=1}%
^{s}\bar{\delta}_{\alpha_{i}}\big)\Big|\leq K\Delta^{p+1},\;s=1,\ldots
,2p+1,\;\label{Db03}\\
\sup_{\substack{(\alpha_{1},\dots,\alpha_{2p+2})\\\alpha_{i}\in\{1,\dots
,d\}}}\mathbf{E}\prod_{i=1}^{2p+2}\big|\bar{\delta}_{\alpha_{i}}\big|\leq
K\Delta^{p+1}\,. \label{Db04}%
\end{gather}
Then the method satisfies Assumption~\ref{a:localError2} with the same $p$.
\end{proposition}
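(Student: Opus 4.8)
\emph{Proof plan.}\quad The plan is to expand the test function $\phi$ in a Taylor series along each of the two increments — the exact increment $\delta(x,\Delta;\omega)=X(\Delta;\omega)-x$ of \eqref{eq:sde} and the one-step increment $\bar\delta(x,\Delta,\xi_{1})$ of \eqref{eq:generalMethod} — through order $2p+1$, subtract, and then estimate the difference term by term. The polynomial part of the difference will be controlled directly by \eqref{Db03}, the Taylor remainder for the scheme by \eqref{Db04}, and the Taylor remainder for the exact flow by a classical moment bound on the SDE increment.

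First I would reduce everything to $\mathbf{R}^{d}$: a function $\phi\in W^{2(p+1),\infty}$ on $\mathbf{T}^{d}$ lifts to a $\mathbf{Z}^{d}$-periodic function of the same regularity on $\mathbf{R}^{d}$, and both increments are naturally $\mathbf{R}^{d}$-valued (as displacements), so it suffices to work with $\phi\colon\mathbf{R}^{d}\to\mathbf{R}$ and $\mathbf{R}^{d}$-valued random vectors $h$. With $X_{0}=X(0)=x$, Taylor's theorem with integral remainder gives
\[
\phi(x+h)=\sum_{s=0}^{2p+1}\frac{1}{s!}\sum_{\alpha_{1},\dots,\alpha_{s}=1}^{d}\big(\partial_{\alpha_{1}}\cdots\partial_{\alpha_{s}}\phi\big)(x)\prod_{i=1}^{s}h_{\alpha_{i}}+\mathcal{R}(x,h),
\]
where the remainder is a finite sum of terms $\big(\int_{0}^{1}(1-\theta)^{2p+1}(\partial_{\alpha_{1}}\cdots\partial_{\alpha_{2p+2}}\phi)(x+\theta h)\,d\theta\big)\prod_{i=1}^{2p+2}h_{\alpha_{i}}$, so $|\mathcal{R}(x,h)|\le C_{d,p}\,|D^{2p+2}\phi|_{\infty}\,|h|^{2p+2}$ (this is why $2(p+1)$ derivatives are needed). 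Applying the identity with $h=\bar\delta$ and with $h=\delta$, taking expectations (all the moments that occur are finite — those of $\delta$ because $f,g$ are bounded, those of $\bar\delta$ by \eqref{Db04} and Hölder), and subtracting, the $s=0$ terms cancel and the polynomial difference is $\sum_{s=1}^{2p+1}\frac{1}{s!}\sum_{\alpha_{1},\dots,\alpha_{s}}\big(\partial_{\alpha_{1}}\cdots\partial_{\alpha_{s}}\phi\big)(x)\big(\mathbf{E}\prod_{i}\delta_{\alpha_{i}}-\mathbf{E}\prod_{i}\bar\delta_{\alpha_{i}}\big)$; each derivative factor has order $s\le 2p+1\le 2(p+1)$ and is thus bounded by $\|\phi\|_{W^{2(p+1),\infty}}$, each bracket is $\le K\Delta^{p+1}$ by \eqref{Db03}, and the number of summands depends only on $d,p$, so the polynomial part is $O(\Delta^{p+1})$ uniformly over $\|\phi\|_{W^{2(p+1),\infty}}\le1$ and over $x$.

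For the two remainders, $\mathbf{E}|\mathcal{R}(x,\bar\delta)|\le C_{d,p}|D^{2p+2}\phi|_{\infty}\,\mathbf{E}|\bar\delta|^{2(p+1)}$, and since $|\bar\delta|^{2(p+1)}=(\sum_{i}\bar\delta_{i}^{2})^{p+1}\le d^{p}\sum_{i}\bar\delta_{i}^{2(p+1)}$, \eqref{Db04} yields $\mathbf{E}|\bar\delta|^{2(p+1)}\le d^{p+1}K\Delta^{p+1}$; while $\mathbf{E}|\mathcal{R}(x,\delta)|\le C_{d,p}|D^{2p+2}\phi|_{\infty}\,\mathbf{E}|X(\Delta)-x|^{2(p+1)}\le C\Delta^{p+1}$ by the standard moment estimate for \eqref{eq:sde} (bounded coefficients, Burkholder--Davis--Gundy). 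Summing the three contributions gives $|\mathbf{E}\phi(X_{1})-\mathbf{E}\phi(X(\Delta))|\le C\Delta^{p+1}$ with $C$ depending only on $d$, $p$, the coefficients of \eqref{eq:sde} and the constant $K$, which is exactly Assumption~\ref{a:localError2}.

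I do not expect a genuine obstacle: the computation is elementary. The only points deserving care — and which I would write out most carefully — are the passage to the universal cover, so that the Euclidean Taylor expansion applies verbatim to the $\mathbf{R}^{d}$-valued increments on the torus, and the bookkeeping showing the constant is uniform over the unit ball of $W^{2(p+1),\infty}$ and over the base point $x$, which comes down to noting that every derivative of $\phi$ appearing has order at most $2(p+1)$ and that all combinatorial factors depend only on $d$ and $p$.
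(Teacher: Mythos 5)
Your proposal is correct and follows essentially the same route as the paper's proof: a Taylor expansion of the test function through order $2p+1$, with the polynomial part of the difference controlled by \eqref{Db03}, the scheme's remainder by \eqref{Db04}, and the exact flow's remainder by the standard moment bound $\mathbf{E}|X(\Delta)-x|^{2(p+1)}\leq C\Delta^{p+1}$ for bounded coefficients. The only cosmetic difference is that the paper phrases the argument for the Poisson-equation solution $\psi$ at a general step $n$ (hence invoking a conditional version of \eqref{Db03}), whereas you work with a generic $\phi$ and a single step, which matches the statement of Assumption~\ref{a:localError2} directly.
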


We note that examples of second-order weak schemes ($p=2$) which satisfy
Assumption~\ref{a:localError2} can be found in many places including \cite[p.
103]{MilsteinTretyakov2004} and \cite{AndersonMattingly09}. Higher order
methods also exist \cite{MilsteinTretyakov2004}. We also note that it follows
from the general theory \cite[p. 100]{MilsteinTretyakov2004} that the
numerical schemes considered in Proposition~\ref{prop:weakOrderConditions}
have weak convergence of order $p$ on finite time intervals. Now we prove a
mean convergence result, which is analogous to Theorem~\ref{thm:veryBasicConv}
in the case of Euler-type methods (\ref{eq:euler}).

\begin{theorem}
\label{thm:high} Let Assumptions~\ref{ass:standing} and \ref{a:localError2}
hold. Let $H$ denote $W^{2p,\infty}$ in the elliptic setting and
$W^{2(p+1),\infty}$ in the hypoelliptic setting. Then for any $\phi\in H$ with
$|\phi|_{H}\leq1$, consider $\hat{\phi}_{N}$ defined by \eqref{eq:estim} where
$X_{n}$ is generated by the numerical method from \eqref{eq:generalMethod}
rather than \eqref{eq:euler} as previously. Then
\begin{equation}
\Big|\mathbf{E}\hat{\phi}_{N}-\bar{\phi}\Big|\leq C\Big(\Delta^{p}+\frac{1}%
{T}\Big), \label{eq:m3}%
\end{equation}
where $T=N\Delta$ and $C$ is some positive constant independent of $\Delta$
and $T$. Furthermore, the constant $C$ is a linear function of $|\phi|_{H}$
and otherwise independent of $\phi$. In other words,
\begin{equation}
\mathrm{Bias}(\hat{\phi}_{N})=O\Big(\Delta^{p}+\frac{1}{T}\Big)\ .
\label{eq:bias3}%
\end{equation}

\end{theorem}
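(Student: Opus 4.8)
The plan is to mimic the proof of Theorem~\ref{thm:veryBasicConv} but with a Taylor expansion of $\psi$ carried out to higher order, exploiting the weak-order-$p$ local error structure provided by Assumption~\ref{a:localError2}. First I would invoke Theorem~\ref{thm:poison} to obtain a solution $\psi$ of the Poisson equation \eqref{eq:poss} that lies in $W^{2p,\infty}$ (elliptic) or $W^{2p+\delta,\infty}$ (hypoelliptic), with the $k$th derivative of $\psi$ controlled by the corresponding derivative of $\phi$; this is exactly what forces the definition of $H$ in the statement, so no new regularity input is needed. The goal is a telescoping identity analogous to \eqref{sumedUp}: writing $\bar\delta_n = \bar\delta(X_n,\Delta,\xi_{n+1})$ and Taylor-expanding $\psi(X_{n+1}) - \psi(X_n)$, one gets a sum of terms of the form $\frac{1}{k!}D^k\psi_n[\bar\delta_n^{\otimes k}]$ for $k = 1,\dots,2p+1$ plus a remainder $R_{n+1}$ involving $D^{2p+2}\psi$ and $\bar\delta_n^{\otimes(2p+2)}$.

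The key step is to reorganize $\sum_{n=0}^{N-1}\big(\psi(X_{n+1})-\psi(X_n)\big)$ using the "generator" of the scheme. I would introduce the discrete generator $\widehat{\mathcal L}^\Delta\psi(x) = \frac{1}{\Delta}\mathbf E\big[\psi(X_1^{x}) - \psi(x)\big]$; by construction $\sum_n \big(\psi_{n+1}-\psi_n\big) = \Delta\sum_n \widehat{\mathcal L}^\Delta\psi(X_n) + \text{(martingale)}$, where the martingale part $M_N = \sum_n\big(\psi_{n+1} - \psi_n - \mathbf E[\psi_{n+1}-\psi_n\mid\mathcal F_{t_n}]\big)$ has mean zero and, since $\psi$ and all $\bar\delta$-moments are bounded on the compact torus, satisfies $\mathbf E M_N^2 \le K N$ (the increments are conditionally bounded in $L^2$ by $O(\Delta)$, using \eqref{Db04} with $s=2$). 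The crucial approximation is that, by Assumption~\ref{a:localError2} applied to $\psi$ itself (legitimate since $\psi \in W^{2(p+1),\infty}$ with the correct bound, in the hypoelliptic case after absorbing $\delta$ into the definition of $H$), one has $\mathbf E[\psi(X_1^x) - \psi(x)] = \mathbf E[\psi(X(\Delta;x)) - \psi(x)] + O(\Delta^{p+1}) = \Delta(\mathcal L\psi)(x) + O(\Delta^{p+1}) = \Delta(\phi(x) - \bar\phi) + O(\Delta^{p+1})$, where the middle equality is the backward Kolmogorov expansion and the last uses \eqref{eq:poss}. Dividing the telescoped identity by $T = N\Delta$ yields
\begin{equation}
\hat\phi_N - \bar\phi = \frac{\psi(X_N) - \psi(X_0)}{T} - \frac{1}{T}M_N + O(\Delta^p),
\label{eq:highSumUp}
\end{equation}
where the $O(\Delta^p)$ term is the average of the $N$ local errors $O(\Delta^{p+1})/\Delta$ and is a deterministic bound with constant linear in $|\phi|_H$.

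Taking expectations in \eqref{eq:highSumUp}: the martingale term vanishes, $|\psi(X_N)-\psi(X_0)| \le 2|\psi|_\infty \le C|\phi|_\infty$, and the remaining term is $O(\Delta^p)$, giving $|\mathbf E\hat\phi_N - \bar\phi| \le C(\Delta^p + 1/T)$ with $C$ linear in $|\phi|_H$, which is \eqref{eq:m3}; the Bias statement \eqref{eq:bias3} is immediate. The linearity in $|\phi|_H$ is tracked exactly as in Theorem~\ref{thm:veryBasicConv}: every constant that appears is a fixed multiple of a single derivative-norm of $\phi$ (via the corresponding derivative-norm bound on $\psi$), and these combine linearly.

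I expect the main obstacle to be the bookkeeping around Assumption~\ref{a:localError2} and the role of $\psi$. The delicacy is that Assumption~\ref{a:localError2} is stated for test functions in $W^{2(p+1),\infty}$, but the natural solution of the Poisson equation in the elliptic case only lives in $W^{2p,\infty}$ when $\phi \in W^{2p,\infty}$ — so to apply the local-error bound to $\psi$ one needs $\phi \in W^{2(p+1),\infty}$, which is strictly stronger than the hypothesis $H = W^{2p,\infty}$ of the theorem in the elliptic case. The resolution, which I would carry out carefully, is not to apply Assumption~\ref{a:localError2} to $\psi$ directly but rather to re-derive the required expansion term-by-term from the Taylor formula for $\psi$ together with the moment-matching conditions \eqref{Db03}--\eqref{Db04} on the increments $\bar\delta_n$ versus $\delta_n$ (this is essentially re-running the proof of Proposition~\ref{prop:weakOrderConditions} with $\psi$ in place of the generic $\phi$, which only needs $\psi \in W^{2p+1,\infty}$ plus a bounded $(2p+2)$th-derivative remainder — attainable from $\phi \in W^{2p,\infty}$ in the elliptic case since one gains two derivatives, though the borderline $W^{2p+2}$ requirement needs the mild extra smoothness that the section already assumes). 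Packaging this cleanly — deciding exactly which moments of $\bar\delta$ and $\delta$ enter, and confirming the error constants depend linearly on the stated norm $|\phi|_H$ — is the one place where care is required; the martingale $L^2$-estimate and the final assembly are routine, exactly paralleling Theorem~\ref{thm:veryBasicConv}.
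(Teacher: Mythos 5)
There is a genuine gap at the central step of your argument. You write that, by the backward Kolmogorov expansion,
\begin{equation*}
\mathbf{E}\bigl[\psi(X(\Delta;x))-\psi(x)\bigr]=\Delta(\mathcal{L}\psi)(x)+O(\Delta^{p+1}),
\end{equation*}
but this is false for $p\geq2$: the expansion actually reads
\begin{equation*}
\mathbf{E}\bigl[\psi(X(\Delta;x))-\psi(x)\bigr]=\sum_{k=1}^{p}\frac{\Delta^{k}}{k!}(\mathcal{L}^{k}\psi)(x)+O(\Delta^{p+1}),
\end{equation*}
and the terms with $k\geq2$ are only $O(\Delta^{2})$ pointwise, not $O(\Delta^{p+1})$. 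If you truncate after the first term, your identity \eqref{eq:highSumUp} acquires an $O(\Delta)$ error rather than $O(\Delta^{p})$, and the whole point of the theorem --- order $p$ accuracy --- is lost. Since the case $p=1$ is already Theorem~\ref{thm:veryBasicConv}, this gap destroys precisely the new content of Theorem~\ref{thm:high}.

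The missing idea, which is the heart of the paper's proof, is how to handle the intermediate averages $Q_{k}=\frac{1}{N}\sum_{n=0}^{N-1}\mathbf{E}(\mathcal{L}^{k}\psi)(X_{n})$ for $k=2,\dots,p$. The key observation is that $\int\mathcal{L}^{k}\psi\,d\mu=0$ (because $\mathcal{L}^{k}\psi=\mathcal{L}(\mathcal{L}^{k-1}\psi)$ and $\mu$ is invariant), so each $Q_{k}$ is itself a numerical time-average of an observable whose stationary average vanishes. Applying the same summed-up identity to $\mathcal{L}^{k-1}\psi$ in place of $\psi$ yields a downward recursion starting from $|Q_{p}|\leq K\Delta+K/T$ and giving $|Q_{k}|\leq K\Delta^{p+1-k}+K/T$, so that $\Delta^{k-1}|Q_{k}|\leq K\Delta^{p}+K\Delta^{k-1}/T$ as required. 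Without this bootstrapping step (or some substitute for it), the proof does not close. A secondary remark: your concern about applying Assumption~\ref{a:localError2} to $\psi$ in the elliptic case is a non-issue --- Theorem~\ref{thm:poison} gives $\psi\in W^{2p+2,\infty}=W^{2(p+1),\infty}$ when $\phi\in W^{2p,\infty}$, which is exactly the regularity the assumption requires, so the direct application is legitimate and the detour through re-proving Proposition~\ref{prop:weakOrderConditions} for $\psi$ is unnecessary.
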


\begin{proof}
(of Theorem \ref{thm:high}) As before, let $\psi$ be the solution to the Poisson equation
associated to $\phi$ given in \eqref{eq:poss}. Define $\psi_{n}=\psi(X_{n})$
and let $X(x,t)$ be the solution to \eqref{eq:sde} at time $t$ with initial
condition $x$. From our assumptions, we have that
\begin{equation}
\mathbf{E}\psi_{n+1}=\mathbf{E}\psi(X(X_{n},\Delta))+O(\Delta^{p+1}).
\label{ta12}%
\end{equation}
Rewriting $\mathbf{E}\psi(X(X_{n},\Delta))$ in \eqref{ta12} via the Taylor
expansion of expectations of SDE solution (\cite[Lemma 2.1.9, p. 99]%
{MilsteinTretyakov2004} or \cite{KloedenPlaten92}), we arrive at
\begin{equation}
\mathbf{E}\psi_{n+1}=\mathbf{E}\psi_{n}+\sum_{k=1}^{p}\frac{\Delta^{k}}%
{k!}\mathbf{E}\left(  \mathcal{L}^{k}\psi\right)  (X_{n})+O(\Delta^{p+1}).
\label{ta2}%
\end{equation}
Summing \eqref{ta2} over the first $N$ terms and dividing by $N\Delta,$ we
obtain
\begin{equation}
\frac{\mathbf{E}\psi_{N}-\psi(x)}{T}=\frac{1}{N}\sum_{n=0}^{N-1}%
\mathbf{E}\left(  \mathcal{L}\psi\right)  (X_{n})+\sum_{k=2}^{p}\frac
{\Delta^{k-1}}{k!}Q_{k}+O(\Delta^{p}), \label{ta30}%
\end{equation}
where $Q_{k}=\frac{1}{N}\sum_{n=0}^{N-1}\mathbf{E}\left(  \mathcal{L}^{k}%
\psi\right)  (X_{n}).$ Using \eqref{eq:poss} and boundedness of $\psi$, we
have after rearrangement of (\ref{ta30}):
\begin{equation}
\Big\vert\frac{1}{N}\sum_{n=0}^{N-1}\mathbf{E}\phi(X_{n})-\bar{\phi
}\Big\vert\leq\sum_{k=2}^{p}\frac{\Delta^{k-1}}{k!}\left\vert Q_{k}\right\vert
+K\Delta^{p}+\frac{K}{T}. \label{ta31}%
\end{equation}
Applying analogous arguments to $\mathcal{L}^{k-1}\psi,$ $k=2,\ldots,p,$ as we
did for $\psi$ in (\ref{ta30}), we have
\[
\frac{\mathbf{E}\mathcal{L}^{k-1}\psi(X_{N})-\mathcal{L}^{k-1}\psi(x)}{T}%
=\sum_{i=k}^{p}\frac{\Delta^{i-k}}{(i+1-k)!}Q_{i}+O(\Delta^{p+1-k}).
\]
Therefore (cf. (\ref{ta31})),
\begin{equation}
|Q_{k}|\leq\sum_{i=k+1}^{p}\frac{\Delta^{i-k}}{(i+1-k)!}\left\vert
Q_{i}\right\vert +K\Delta^{p+1-k}+\frac{K}{T},\ \ k=2,\ldots,p,\nonumber
\end{equation}
and, in particular, $|Q_{p}|\leq K\Delta+K/T.$ Hence $|Q_{k}|\leq
K\Delta^{p+1-k}+K/T$ which together with (\ref{ta31}) implies (\ref{eq:m3}%
).\footnote{To help with intuitive understanding of the proof, we remark that
$\int\mathcal{L}^{k}\psi(x)\,d\mu(x)=0$ which is approximated by $Q_{k}$ with
sufficient accuracy.}
\end{proof}

\begin{remark}
\label{rem:genm2} If we substitute $X_{n}$ from the method
\eqref{eq:generalMethod} in $\hat{\phi}_{N}$ from \eqref{eq:estim}, it is also
possible to prove (analogous to Theorem~\ref{thm:moreConv}) that
\begin{equation}
\left\vert \hat{\phi}_{N}-\bar{\phi}\right\vert \leq K\Delta^{p}%
+\frac{C(\omega)}{T^{1/2-\varepsilon}}\ \ \ \ \text{a.s.\ \ .} \label{eq:mm2}%
\end{equation}

\end{remark}

\begin{proof}
(of Proposition \ref{prop:weakOrderConditions}) We note that under the
assumptions made the solution $\psi$ of the Poisson equation \eqref{eq:poss}
is sufficiently smooth. Expanding $\psi_{n+1} =\psi(X_{n+1})$ in powers of
$\bar{\delta}_{n}\overset{\mbox{\tiny def}}{=} X_{n+1}-X_{n}$ and taking
expectation, we get
\begin{align}
\mathbf{E}\psi_{n+1}  &  =\mathbf{E}\psi_{n}+\mathbf{E}D\psi_{n}[\bar{\delta
}_{n}]+\frac{1}{2}\mathbf{E}D^{2}\psi_{n}[\bar{\delta}_{n},\bar{\delta}
_{n}]+\cdots\label{ta1}\\
&  +\frac{1}{\left(  2p+1\right)  !}\mathbf{E}D^{2p+1}\psi_{n}%
[\underset{2p+1}{\underbrace{\bar{\delta}_{n},\ldots,\bar{\delta}_{n}}%
}]+O(\Delta^{p+1}),\nonumber
\end{align}
where the remainder $|O(\Delta^{p+1})|\leq K\Delta^{p+1}$ with $K$ independent
of $\Delta$ and $n$. If we replace $\bar{\delta}_{n}$ by $\delta_{n}$ defined
in \eqref{eq:delta} in (\ref{ta1}) then, using the conditional version of
(\ref{Db03}), we obtain (with a different remainder $O(\Delta^{p+1})$ than in
(\ref{ta1})):
\begin{align}
\mathbf{E}\psi_{n+1}  &  =\mathbf{E}\psi_{n}+\mathbf{E}D\psi_{n}[\delta
_{n}]+\frac{1}{2}\mathbf{E}D^{2}\psi_{n}[\delta_{n},\delta_{n}]+\cdots
\label{ta11}\\
&  +\frac{1}{\left(  2p+1\right)  !}\mathbf{E}D^{2p+1}\psi_{n}%
[\underset{2p+1}{\underbrace{\delta_{n},\ldots,\delta_{n}}}]+O(\Delta
^{p+1}).\nonumber
\end{align}
It is not difficult to see that the right-hand side of \eqref{ta11} coincide
with expectation of the Taylor expansion of $\psi(X(X_{n},\Delta))$ around
$X_{n}$ up to a reminder of order $\Delta^{p+1}.$ Hence
\[
\mathbf{E}\psi_{n+1}=\mathbf{E}\psi((X(X_{n},\Delta))+O(\Delta^{p+1})
\]
and the proof is complete.
\end{proof}


\subsection{The Richardson-Romberg (Talay-Tubaro) error
expansion\label{sec:expan}}

In this section we consider an expansion of the global error $\mathbf{E}%
\hat{\phi}_{N}-\bar{\phi}$ in powers of the time step $\Delta$ analogous to
the Talay-Tubaro result \cite{TalayTubaro90}. As in the previous section, we
assume here that the coefficients of the SDE \eqref{eq:sde} and the function
$\phi$ are sufficiently smooth. Note that we obtain the expansion both in the
elliptic and hypoelliptic setting.

\begin{theorem}
\label{thm:expan}Let Assumptions~\ref{ass:standing} and \ref{a:localError2}
hold. Let $q$ be a positive integer and $H$ denote $W^{2(p+q+1),\infty}$ in
the elliptic setting and $W^{2(p+q+2),\infty}$ in the hypoelliptic setting.
For any $\phi\in H$ with $|\phi|_{H}\leq1$, consider $\hat{\phi}_{N}$ defined
by \eqref{eq:estim} where $X_{n}$ is generated by the numerical method from
\eqref{eq:generalMethod}. Then
\begin{equation}
\mathbf{E}\hat{\phi}_{N}-\bar{\phi}=C_{0}^{p}\Delta^{p}+\cdots+C_{q}%
\Delta^{p+q}+O\left(  \Delta^{p+q+1}+\frac{1}{T}\right)  \ , \label{exp1}%
\end{equation}
where $T=N\Delta;$
\[
\left\vert O\left(  \Delta^{p+q+1}+\frac{1}{T}\right)  \right\vert \leq
K\left(  \Delta^{p+q+1}+\frac{1}{T}\right)  \ ;
\]
and the constants $C_{0},\ldots,C_{q},$ $K$ are independent of $\Delta$ and
$T$ and they are linear function of $|\phi|_{H}$ and otherwise independent of
$\phi$. In other words,
\[
\mathrm{Bias}(\hat{\phi}_{N})=C_{0}^{p}\Delta^{p}+\cdots+C_{q}\Delta
^{p+q}+O\left(  \Delta^{p+q+1}+\frac{1}{T}\right)  \ .
\]

\end{theorem}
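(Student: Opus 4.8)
The plan is to iterate the argument used in the proof of Theorem~\ref{thm:high}, but now keeping track of the higher-order terms in the local expansion rather than discarding them into $O(\Delta^{p})$. First I would invoke Assumption~\ref{a:localError2} together with the Taylor expansion of $\mathbf{E}\psi(X(X_n,\Delta))$ from \cite[Lemma~2.1.9]{MilsteinTretyakov2004}, but carried to order $p+q+1$ rather than $p$: this gives
\begin{equation}
\mathbf{E}\psi_{n+1}=\mathbf{E}\psi_n+\sum_{k=1}^{p+q}\frac{\Delta^k}{k!}\mathbf{E}(\mathcal{L}^k\psi)(X_n)+O(\Delta^{p+q+1}),\nonumber
\end{equation}
where the extra smoothness of $\phi$ (hence of $\psi$, via Theorem~\ref{thm:poison}) is exactly what the enlarged space $H=W^{2(p+q+1),\infty}$ (resp.\ $W^{2(p+q+2),\infty}$ in the hypoelliptic case) is there to guarantee. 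Summing over $n=0,\dots,N-1$, dividing by $N\Delta$, and using $\mathcal{L}\psi=\phi-\bar\phi$ together with $|\psi_N-\psi_0|\le 2|\psi|_\infty$, I get
\begin{equation}
\Big|\frac{1}{N}\sum_{n=0}^{N-1}\mathbf{E}\phi(X_n)-\bar\phi-\sum_{k=2}^{p+q}\frac{\Delta^{k-1}}{k!}Q_k\Big|\le K\Big(\Delta^{p+q}+\frac{1}{T}\Big),\nonumber
\end{equation}
with $Q_k=\frac1N\sum_{n=0}^{N-1}\mathbf{E}(\mathcal{L}^k\psi)(X_n)$, exactly as in (\ref{ta31}) but with the sum now running to $p+q$.

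The heart of the matter is then to expand each $Q_k$ itself. The key observation (already flagged in the footnote to Theorem~\ref{thm:high}) is that $\int_{\mathbf{T}^d}\mathcal{L}^k\psi\,d\mu=0$ for every $k\ge 1$, so $\mathcal{L}^k\psi$ is itself a legitimate right-hand side for a Poisson equation; let $\psi^{(k)}$ solve $\mathcal{L}\psi^{(k)}=\mathcal{L}^k\psi$. Repeating the summation-by-parts argument applied to $\psi^{(k)}$ instead of $\psi$ yields, for each $k=2,\dots,p+q$, an identity of the form
\begin{equation}
Q_k=\sum_{j=k+1}^{p+q}\frac{\Delta^{j-k}}{(j+1-k)!}Q_j+R_k^{p}\,\Delta^{p+q+1-k}+O\Big(\frac{1}{T}\Big),\nonumber
\end{equation}
where the coefficients $R_k^p$ are constants built from integrals against $\mu$ of iterated applications of $\mathcal{L}$ to $\psi$ and its Poisson-iterates. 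Starting from $k=p+q$ (where the sum is empty, so $Q_{p+q}=O(\Delta+1/T)$) and descending, I solve this triangular recursion: it expresses each $Q_k$ as a finite polynomial in $\Delta$ with constant coefficients plus $O(\Delta^{p+q+1-k}+1/T)$. Substituting these back into the displayed bound for $\mathbf{E}\hat\phi_N-\bar\phi$ and collecting powers of $\Delta$ produces the claimed expansion $C_0^p\Delta^p+\cdots+C_q\Delta^{p+q}+O(\Delta^{p+q+1}+1/T)$; linearity of the constants in $|\phi|_H$ is inherited at each stage from the linearity of $\phi\mapsto\psi$ and the linear bounds on the derivatives of $\psi$ in terms of those of $\phi$ supplied by Theorem~\ref{thm:poison}.

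The main obstacle I anticipate is bookkeeping rather than conceptual: one has to check that the Sobolev regularity budgeted in the statement is actually enough for \emph{all} the Poisson-iterates $\psi^{(k)}$, $k=1,\dots,p+q$, to be classically differentiable to the order needed when $\psi^{(k)}$ is expanded to order $p+q+1-k$ in the corresponding summation — each application of $\mathcal{L}$ costs two derivatives and each inversion of $\mathcal{L}$ restores two (elliptic) or $\delta<2$ (hypoelliptic) — so the hypoelliptic count is what forces the extra $W^{\cdots,\infty}$ in the hypoelliptic case, exactly as in Theorems~\ref{thm:veryBasicConv}--\ref{thm:high}. A secondary point to be careful about is that the $O(1/T)$ remainders accumulated over the $q$ levels of the recursion must be shown to combine into a single $O(1/T)$ with a constant still linear in $|\phi|_H$; since there are only finitely many levels ($q$ of them, with $q$ fixed) this is immediate, but it should be stated. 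I would not expect to need any new probabilistic input beyond what is already used for Theorem~\ref{thm:high}; the martingale/a.s.\ machinery of Theorem~\ref{thm:moreConv} is not needed here because the statement concerns only the bias $\mathbf{E}\hat\phi_N-\bar\phi$.
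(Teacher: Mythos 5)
There is a genuine gap, and it sits in the very first display of your argument. Assumption~\ref{a:localError2} only guarantees $\mathbf{E}\,\psi(X_1)-\mathbf{E}\,\psi(X(\Delta))=O(\Delta^{p+1})$, so combining it with the Taylor expansion of $\mathbf{E}\,\psi(X(X_n,\Delta))$ carried to higher order can never yield $\mathbf{E}\psi_{n+1}=\mathbf{E}\psi_n+\sum_{k=1}^{p+q}\frac{\Delta^k}{k!}\mathbf{E}(\mathcal{L}^k\psi)(X_n)+O(\Delta^{p+q+1})$: the method's local error is stuck at order $\Delta^{p+1}$ and cannot be absorbed into $O(\Delta^{p+q+1})$. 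Worse, if that expansion \emph{were} valid you would prove the wrong theorem: since $\mathcal{L}^{\ast}\mu=0$ gives $\int\mathcal{L}^k\psi\,d\mu=0$ for all $k\geq1$, every $Q_k$ tends to zero, your triangular recursion would force all the constants $C_0,\dots,C_q$ to vanish, and you would conclude $\mathbf{E}\hat\phi_N-\bar\phi=O(\Delta^{p+q}+1/T)$, which is false for a generic order-$p$ scheme. The entire content of the Talay--Tubaro constants is the \emph{mismatch}, at orders $\Delta^{p+1},\dots,\Delta^{p+q+1}$, between the one-step expectation of the numerical method and that of the SDE; your expansion erases exactly that mismatch.

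The paper's proof (written out for $p=1$, $q=0$) keeps it: one writes $\mathbf{E}\psi_{n+1}=\mathbf{E}\psi_n+\Delta\,\mathbf{E}(\mathcal{L}\psi)(X_n)+\Delta^{2}\,\mathbf{E}A(X_n)+O(\Delta^{3})$, where $A$ is the \emph{scheme-dependent} coefficient of $\Delta^2$ in the expansion of $\mathbf{E}\psi_1$ --- for the explicit Euler--Maruyama scheme \eqref{Euler-Mar} it is the explicit differential expression displayed in the paper, and it is not $\tfrac12\mathcal{L}^2\psi$ (whose stationary average would be zero). Summing and using the Poisson equation leaves the term $-\frac{\Delta}{N}\sum_n\mathbf{E}A(X_n)$, and the key step is to apply the already-proven bias estimate (Theorem~\ref{thm:veryBasicConv}, resp.\ Theorem~\ref{thm:high}) \emph{to the new observable} $A$, giving $\frac1N\sum_n\mathbf{E}A(X_n)=\bar A+O(\Delta+1/T)$ and hence $C_0=-\bar A$; iterating produces the higher constants. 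To repair your proposal you must (i) posit, or derive for the scheme at hand, a refined local expansion with explicit coefficient functions at orders $\Delta^{p+1},\dots,\Delta^{p+q+1}$ --- genuinely more information than Assumption~\ref{a:localError2} provides --- and (ii) feed the resulting time-averages of those coefficient functions back into the previously established theorems; the $Q_k$ recursion you describe then only controls remainders, not the constants. Your regularity bookkeeping and the accumulation of the $O(1/T)$ terms are secondary and unproblematic once this is fixed.
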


\begin{proof}
Here we make use of the Poisson equation again and also exploit an idea used
in the proof of the Talay-Tubaro expansion in the finite time case from
\cite[pp. 106-108]{MilsteinTretyakov2004}. We prove the theorem in the case of
$p=1$ (i.e., for Euler-type schemes) and $q=0$ for the clarity of the
exposition. It is not difficult to extend the proof to arbitrary $p,q>0.$

In the case of $p=1$ for a particular Euler-type scheme, we can write (cf.
(\ref{ta2})):
\begin{equation}
\mathbf{E}\psi_{n+1}=\mathbf{E}\psi_{n}+\Delta\mathbf{E}\left(  \mathcal{L}%
\psi\right)  (X_{n})+\Delta^{2}\mathbf{E}A(X_{n})\mathbf{+}O(\Delta^{3})\ ,
\label{exp2}%
\end{equation}
where $A(x)$ is the coefficient at $\Delta^{2}$ in the corresponding expansion
of $\mathbf{E}\psi_{1}$ at $X_{0}=x.$ For instance, in the case of the
explicit Euler-Maruyama scheme (\ref{Euler-Mar})%
\[
A(x)=\frac{1}{2}\sum_{i,j=1}^{d}f_{i}f_{j}\frac{\partial^{2}\psi}{\partial
x_{i}\partial x_{j}}+\frac{1}{2}\sum_{i,j,k=1}^{d}f_{i}a_{jk}\frac
{\partial^{3}\psi}{\partial x_{i}\partial x_{j}\partial x_{k}}+\frac{1}%
{24}\sum_{i,j,k,l=1}^{d}a_{ij}a_{kl}\frac{\partial^{4}\psi}{\partial
x_{i}\partial x_{j}\partial x_{k}\partial x_{l}},
\]
where all the coefficients and the derivatives of the function $\psi$ are
evaluated at $x.$ We do not need the explicit form of $A(x)$ for the proof.

Summing (\ref{exp2}) over the first $N$ terms, dividing by $N\Delta,$ using
(\ref{eq:poss}) and rearranging the terms, we obtain
\begin{align}
\mathbf{E}\hat{\phi}_{N}-\bar{\phi}  &  =\frac{1}{T}(\mathbf{E}\psi_{N}%
-\psi_{0})-\frac{\Delta}{N}\sum_{n=0}^{N-1}\mathbf{E}A(X_{n})\mathbf{+}%
O(\Delta^{2})\label{exp3}\\
&  =-\frac{\Delta}{N}\sum_{n=0}^{N-1}\mathbf{E}A(X_{n})\mathbf{+}O\left(
\Delta^{2}+\frac{1}{T}\right)  \ .\nonumber
\end{align}
Due to Theorem~\ref{thm:veryBasicConv}, we have
\begin{equation}
\mathbf{E}\frac{1}{N}\sum_{n=0}^{N-1}A(X_{n})-\bar{A}=O\left(  \Delta+\frac
{1}{T}\right)  \,, \label{exp4}%
\end{equation}
where the constant $\bar{A}$ is the stationary average of $A$ (see
(\ref{eq:ergli})). The expansion (\ref{exp1}) with $p=1,$ $q=0$ and
$C_{0}=-\bar{A}$ follows from (\ref{exp3}) and (\ref{exp4}).
\end{proof}

\section{Error analysis for the numerical stationary measures}

\label{sec:ClosenessII}

\subsection{Distances between true and approximate stationary measures}

We now use the results of the previous section to prove that any stationary
measure of the numerical method is close to that of the underlining SDE. The
existence of stationary measures for the numerical method follows by the
Krylov-Bogoliubov construction.\footnote{The fact that our state space is
compact ensures that the time-averaged transition measure forms a
tight family of probability measures
\cite{Hasminskii1980}.} We have assumed in \eqref{eq:SDEexpand} and
\eqref{eq:genApprox} or Assumption~\ref{a:localError2} that the finite time
dynamics of our method and SDE are close. This can be seen as a form of
\textquotedblleft consistency\textquotedblright. It is reasonable to expect
that the longtime behavior will be close since our setting of the torus
provides the necessary \textquotedblleft stability\textquotedblright through ergodicity.


We begin by giving a metric in which we will measure the distance between
measures. If $\mu^{\Delta}$ is a stationary measure of the numerical method,
then for any bounded function $\phi:\mathbf{T}^{d}\rightarrow\mathbf{R}$ and
$n\in\mathbf{N:}$
\begin{equation}
\int_{\mathbf{T}^{d}}\mathbf{E}_{z}\phi(X_{n})\mu^{\Delta}(dz)=\int%
_{\mathbf{T}^{d}}\phi(z)\mu^{\Delta}(dz),
\end{equation}
where $\mathbf{E}_{z}$ denotes the expectation conditional on $X(0)=z$. Fixing
an integer $p \geq1$, we define the metric $\rho$ between two probability
measures on $\mathbf{T}^{d} $ by
\[
\rho(\nu_{1},\nu_{2})=\sup_{\phi\in\mathcal{H}}\left(  \int\phi(z)\nu
_{1}(dz)-\int\phi(z)\nu_{2}(dz)\right)  ,
\]
where $\mathcal{H}=\{\phi:\mathbf{T}^{d}\rightarrow\mathbf{R}:|\phi|_{H}%
\leq1\}$ and $H=W^{2p,\infty}$ in the elliptic setting and $H=W^{2(p+1),\infty
}$ in the hypoelliptic setting. Observe that since $\phi\in\mathcal{H}$
implies $-\phi\in\mathcal{H}$, one also has the equivalent, and sightly more
standard, characterization of $\rho$ given by
\[
\rho(\nu_{1},\nu_{2})=\sup_{\phi\in\mathcal{H}}\left\vert \int\phi(z)\nu
_{1}(dz)-\int\phi(z)\nu_{2}(dz)\right\vert \,.
\]


\begin{theorem}
\label{cor:invMeasureClose} Defining the metric $\rho$ as above for some
integer $p \geq1$ and assume that either:

\begin{enumerate}
\item $p=1$ and Assumptions~\ref{ass:standing} and \ref{ass:eular} hold; or

\item Assumptions~\ref{ass:standing} and \ref{a:localError2} hold.
\end{enumerate}

then there exists a positive $C$ so that if $\mu^{\Delta}$ is any stationary
measure of the numerical method \eqref{eq:euler} then
\begin{align*}
\rho(\mu^{\Delta}, \mu) \leq C\Delta^{p}\;.
\end{align*}

\end{theorem}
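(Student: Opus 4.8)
The plan is to exploit the stationarity of $\mu^{\Delta}$ together with the bias estimates already established for the time-averaging estimator. The key observation is that if $\mu^{\Delta}$ is a stationary measure of the numerical method, and we start the chain $X_0 \sim \mu^{\Delta}$, then $X_n \sim \mu^{\Delta}$ for every $n$, so that $\mathbf{E}\,\phi(X_n) = \int \phi \, d\mu^{\Delta}$ for all $n$. Consequently the time-average estimator satisfies $\mathbf{E}\,\hat{\phi}_N = \int \phi\, d\mu^{\Delta}$ exactly, for every $N$ and every $\Delta$. On the other hand, Theorem~\ref{thm:veryBasicConv} (in case (i)) or Theorem~\ref{thm:high} (in case (ii)) tells us that
\[
\Big| \mathbf{E}\,\hat{\phi}_N - \bar{\phi} \Big| \leq C\Big(\Delta^{p} + \frac{1}{T}\Big)
\]
with $C$ linear in $|\phi|_H$ and otherwise independent of $\phi$, and crucially independent of the initial distribution (the bias estimate holds for every fixed starting point $x_0$, hence by integrating against $\mu^{\Delta}$ it holds when $X_0 \sim \mu^{\Delta}$ as well).

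The argument then runs as follows. Fix $\phi \in \mathcal{H}$, i.e.\ $|\phi|_H \leq 1$. Combining the two facts above, for any $N$ with $T = N\Delta$,
\[
\Big| \int \phi\, d\mu^{\Delta} - \int \phi\, d\mu \Big| = \Big| \mathbf{E}\,\hat{\phi}_N - \bar{\phi} \Big| \leq C\Big(\Delta^{p} + \frac{1}{T}\Big)\,.
\]
The left-hand side does not depend on $N$, so we are free to let $N \to \infty$ with $\Delta$ held fixed; the term $1/T = 1/(N\Delta) \to 0$, leaving
\[
\Big| \int \phi\, d\mu^{\Delta} - \int \phi\, d\mu \Big| \leq C\Delta^{p}\,.
\]
Since $C$ can be taken independent of $\phi \in \mathcal{H}$ (it depends on $\phi$ only through $|\phi|_H \leq 1$, and linearly), taking the supremum over $\phi \in \mathcal{H}$ gives $\rho(\mu^{\Delta}, \mu) \leq C\Delta^{p}$, which is the claim. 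One should double-check that the function classes match: in the statement of Theorem~\ref{cor:invMeasureClose} the space $H$ defining $\mathcal{H}$ is exactly the space $H$ appearing in Theorem~\ref{thm:veryBasicConv} when $p=1$ (elliptic $W^{2,\infty}$, hypoelliptic $W^{4,\infty}$) and in Theorem~\ref{thm:high} for general $p$ (elliptic $W^{2p,\infty}$, hypoelliptic $W^{2(p+1),\infty}$), so no mismatch arises.

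The main subtlety — the only real obstacle — is justifying that the bias estimates of Theorems~\ref{thm:veryBasicConv} and~\ref{thm:high} apply with the initial condition randomized according to $\mu^{\Delta}$, with a constant $C$ that does not deteriorate. This is handled by inspecting the proofs of those theorems: the bound on $|\mathbf{E}\hat\phi_N - \bar\phi|$ is obtained pointwise in $x_0$ from \eqref{sumedUp}, with all the controlling quantities ($|\psi_N - \psi_0| \leq 2|\phi|_\infty$, the martingale terms having zero mean, the deterministic bounds on $S_{1,N}$, $S_{2,N}$, $S_{0,N}$ and on $\sum (\mathcal{L}^\Delta - \mathcal{L})\psi_n$) being uniform in the starting point. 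Hence integrating the inequality $|\mathbf{E}_{x_0}\hat\phi_N - \bar\phi| \leq C(\Delta^p + 1/T)$ against $\mu^{\Delta}(dx_0)$ and using $\int \mathbf{E}_{x_0}\phi(X_n)\,\mu^{\Delta}(dx_0) = \int\phi\,d\mu^{\Delta}$ preserves the estimate. It is also worth recording explicitly that no ergodicity of the numerical chain is used: the argument applies to \emph{any} stationary measure $\mu^{\Delta}$, and existence of at least one such measure follows from the Krylov--Bogoliubov construction on the compact torus, as noted above.
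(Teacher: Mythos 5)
Your proposal is correct and follows essentially the same route as the paper: both use stationarity to identify $\int\phi\,d\mu^{\Delta}$ with the expectation of the time-average started from $\mu^{\Delta}$, invoke Theorem~\ref{thm:veryBasicConv} or Theorem~\ref{thm:high} uniformly in the starting point, and let $N\rightarrow\infty$ to kill the $1/T$ term. Your explicit check that the bias constant is uniform in $x_0$ is a point the paper leaves implicit, but it is the same argument.
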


\begin{proof}
Since $\mu^{\Delta}$ is stationary, we have that $\int\phi(z)\mu^{\Delta
}(dz)=\int\mathbf{E}_{z}\phi(X_{n})\mu^{\Delta}(dz)$ for any $n\geq0$ and
hence
\[
\int\phi(z)\mu^{\Delta}(dz)=\int\frac{1}{N}\sum_{n=0}^{N-1}\mathbf{E}_{z}%
\phi(X_{n})\,\mu^{\Delta}(dz).
\]
Then
\begin{align*}
\int\phi(z)\mu^{\Delta}(dz)-\int\phi(z)\mu(dz) &  =\int\Big[\frac{1}{N}%
\sum_{n=0}^{N-1}\mathbf{E}_{z}\phi(X_{n})-\bar{\phi}\Big]\,\mu^{\Delta}(dz)\\
&  \leq\int\Big|\frac{1}{N}\sum_{n=0}^{N-1}\mathbf{E}_{z}\phi(X_{n})-\bar
{\phi}\Big|\,\mu^{\Delta}(dz)\leq C\Big(\Delta^{p}+\frac{1}{\Delta N}\Big)\ ,
\end{align*}
where in the last estimate we have invoked either
Theorem~\ref{thm:veryBasicConv} or Theorem~\ref{thm:high} depending on which
assumptions hold. Now taking $N\rightarrow\infty$, proves the result since the
right-hand side is uniform for any $\phi\in\mathcal{H}$.
\end{proof}

We reemphasize that we have not assumed in this section that the numerical
method is uniquely ergodic. Rather we have shown that any stationary measure
of the numerical system is close to the true stationary measure in the $\rho
-$distance. It is, of course, possible in many settings to show that the
numerical method is itself uniquely ergodic (cf.
\cite{Talay90,TalayHam02,MattinglyStuartHighamSDENUM02}).

\begin{remark}
It follows from Theorem~\ref{thm:expan} that under its assumptions the error
$\rho(\mu^{\Delta},\mu)$ can be expanded in powers of the time step:
\[
\rho(\mu^{\Delta},\mu)=C_{0}^{p}\Delta^{p}+\cdots+C_{q}\Delta^{p+q}+O\left(
\Delta^{p+q+1}\right)  .
\]

\end{remark}

\begin{remark}
It is also worth contrasting the result of Theorem~\ref{cor:invMeasureClose}
with a short alternative proof. Let $P_{t}$ denote the Markov semigroup
associated with the SDE and $P_{n}^{\Delta}$ with $n$ steps of a numerical
method with step size $\Delta$. Suppose one knows for some metric $d$ on
probability measures that $d(P_{t}\mu_{1},P_{t}\mu_{2})\leq Ke^{-\gamma
t}d(\mu_{1},\mu_{2})$ holds for all probability measures $\mu_{i}$ and that
for any fixed $n$, $d(P_{n}^{\Delta}\mu^{\Delta},P_{n\Delta}\mu^{\Delta})\leq
C_{n}\Delta^{p}$ for some constant $C_{n},$ all $\Delta$ sufficiently small
and any measure $\mu^{\Delta}$ invariant for $P^{\Delta}$. Then if one fixes
an $n$ so that $Ke^{-\gamma n}\leq1/2$ then
\begin{align*}
d(\mu,\mu^{\Delta})  &  =d(P_{n\Delta}\mu,P_{n}^{\Delta}\mu^{\Delta})\leq
d(P_{n\Delta}\mu,P_{n\Delta}\mu^{\Delta})+d(P_{n\Delta}\mu^{\Delta}%
,P_{n}^{\Delta}\mu^{\Delta})\\
&  \leq\frac{1}{2}d(\mu,\mu^{\Delta})+C_{n}\Delta^{p}%
\end{align*}
and collecting the $d(\mu,\mu^{\Delta})$ produces the estimate $d(\mu
,\mu^{\Delta})\leq2C_{n}\Delta^{p}$ for all $\Delta$ sufficiently small. The
challenge in implementing such a seemingly simple program is obtaining the two
estimates in the same metric $d$. Typically, the first estimate is available
in the total variation distance. On the other hand, the second estimate is
usually estimated in distances requiring test functions with a number of
derivatives. The recent works \cite{HaiMat08:??,hairerMattingly2009} allow one
to obtain the first estimate in the 1-Wasserstein distance which simplifies
matching the two norms. Using this strategy on can obtain a bound in a
stronger norm (such as total variation or 1-Wasserstein metric), but the
$d(P_{n}^{\Delta}\mu,P_{\Delta n}\mu)$ convergence rate will often be
sub-optimal in these metrics. On the other hand, one can obtain $d(P_{t}%
\mu_{1},P_{t}\mu_{2})\leq Ke^{-\gamma t}d(\mu_{1},\mu_{2})$ with $d$ defined
as in the metric $\rho$ above with $H=W^{2(p+1),\infty}$ by using that fact
that $\Vert P_{1}\phi\Vert_{H}\leq C\Vert\phi\Vert_{\infty}$ from some $C$ and
hence $\Vert P_{t+1}\phi-\bar{\phi}\Vert_{H}\leq C\Vert P_{t}\phi-\bar{\phi
}\Vert_{\infty}\leq CKe^{-\gamma t}\Vert\phi-\bar{\phi}\Vert_{\infty}\leq
CKe^{-\gamma t}\Vert\phi-\bar{\phi}\Vert_{H}$. This gives a result comparable
to the hypoelliptic result in Theorem~\ref{cor:invMeasureClose} though misses
the smoothing in the elliptic result which is embodied in the fact one can use
$H=W^{2p,\infty}$. (Some partial smoothing could have been extracted in the
hypoelliptic case in Theorem~\ref{cor:invMeasureClose} with more care). See
\cite{HaiMat08:??} for this program executed in a particular setting.
\end{remark}

\subsection{Relationship to Stein's method}

\label{sec:stein} This section is devoted to outlining the similarity between
the current setting and Stein's method.\footnote{When this work was nearing
completion, a conversation with between JCM and Sourav Chatterjee prompted the
authors to write this section reflecting on the relation between their
approach and Stein's method.} This connection is not fully explored in this
work but we believe that it is insightful to highlight the main idea. Though
our goals are different, there are some
passing similarities between the details in this paper and
\cite{Chatterjee07,Barbour90}.

Let us recall that Stein's method is a generic tool for finding bounds on a
distance between two distributions which then can give quantitative
convergence results. Denoting the target distribution as $\pi$, the idea is to
find an operator $\mathcal{A}$ and a determining class of functions
$\mathcal{G}$ so that if, for all $g\in\mathcal{G}$ one has $\int%
\mathcal{A}g(x)d\widetilde{\pi}(x)=0$, then this implies that $\widetilde{\pi
}=\pi$.\footnote{As a referee correctly observed, the
  Echeverria-Weiss theorem is useful in identifying  when such a
  condition characterizes an invariant measure and identifying the
  limiting invariant measure for a sequence measures with  $\int
\mathcal{A}g(x)d\widetilde{\pi_n}(x) \rightarrow 0$ as $n \rightarrow
\infty$. However, here we are really interested in the next order
question. How to use the degree to which the characterizing equation
is \textit{not} satisfied to obtain quantitative  estimates of the
convergence rate.}Given any $h$ sufficiently nice, one next solves Stein's equation
\begin{equation}
\mathcal{A}g=h\label{stein}%
\end{equation}
with the tacit assumption that the solution $g$ exists and lies in
$\mathcal{G}$. Observe that a basic solvability condition on the above
equation requires that $\int hd\pi=0$ when $\mathcal{A}^{\ast}\pi=0$. In this
setting we can consider the equation $\mathcal{A}g=h-\int hd\pi$ for more
general, uncentered $h$. In order to quantify the distance of a given measure
$\widetilde{\pi}$ from $\pi$, one tries to control $\int(\mathcal{A}%
g)(z)d\widetilde{\pi}(z)$ by some norm of $g$ which can in turn be controlled
by an appropriate norm of $h$. For definiteness let us assume that
\[
\int(\mathcal{A}g)(z)d\widetilde{\pi}(z)\leq\epsilon|g|_{\mathcal{G}}%
\leq\epsilon C\Vert h\Vert
\]
if $g$ satisfies \eqref{stein}. Then
\[
\int hd\widetilde{\pi}-\int hd\pi=\int(\mathcal{A}g)(z)d\widetilde{\pi}%
(z)\leq\epsilon C\Vert h\Vert\;.
\]
By taking the supremum over all $h$ in a given class with $\Vert h\Vert\leq1,$
one obtains control over the distance of $\widetilde{\pi}$ from $\pi$. In
particular, if $\epsilon$ is small then $\widetilde{\pi}$ is close to $\pi$.

This is essentially the methodology we have followed. Taking $\mathcal{A}$ to
be the generator $\mathcal{L}$ of the Markov process, we are ensured that
$\mathcal{A}^{\ast}\pi=0$ if $\pi$ is the Markov process' stationary
measure.\footnote{This is done in some versions of Stein's method. See
\cite{Barbour90}.} The basic idea of this note is to show that if
$\mathcal{\widetilde{L}}$ is a generator of another Markov process so that
$\mathcal{\widetilde{L}}-\mathcal{L}$ is small then any stationary measure of
the second Markov process will be close to $\pi$. We will further assume that
$\widetilde{\pi}$ is ergodic.

While our paper concerns a mixture of continuous and discrete time, the idea
can be more easily demonstrated in a continuous time setting. Let
$\mathcal{P}_{t}$ and $\mathcal{\widetilde{P}}_{t}$ be strong Markov
semigroups on $\mathbf{R}^{d}$ with generators $\mathcal{L}$ and
$\mathcal{\widetilde{L}}$ both defined on some common domain $D$. Let $\pi$
and $\widetilde{\pi}$ be stationary measures for $\mathcal{P}_{t}$ and
${\tilde{\mathcal{P}}}_{t},$ respectively. Assume that for some set of bounded
functions $\mathcal{G} \subset D$ there exists an $\epsilon>0$ so that
\begin{equation}
\int(\mathcal{L}-\mathcal{\widetilde{L}})gd\widetilde{\pi}\leq\epsilon\Vert
g\Vert_{\mathcal{G}} \label{eq:closeLs}%
\end{equation}
for any $g\in\mathcal{G}$. Further assume that for some class of bounded, real
valued functions $\mathcal{H}$ on $\mathbf{R}^{d}$ one can solve
$\mathcal{L}g=h-\int hd\pi$ for $h\in\mathcal{H}$ with a solution
$g\in\mathcal{G}$ and $\Vert g\Vert_{\mathcal{G}}\leq K\Vert h\Vert
_{\mathcal{H}}$ for a fixed constant $K$. Then for all $h\in\mathcal{H}$,
\[
\int hd\widetilde{\pi}-\int hd\pi=\int(\mathcal{L}g)d\widetilde{\pi}%
=\int(\mathcal{\widetilde{L}}g)d\widetilde{\pi}+\int(\mathcal{L}%
-\mathcal{\widetilde{L}})gd\widetilde{\pi}\leq\epsilon K\Vert h\Vert
_{\mathcal{H}}\ ,
\]
where in moving from the penultimate expression to the last, we have used
$\mathcal{\widetilde{L}}^{\ast}\widetilde{\pi}=0$. Since the right-hand side
is uniform in $h$ we can take the supremum over $h$. If $\mathcal{H}$ was a
rich-enough class to define a metric, we obtain some estimate of the distance
between the two stationary measures in that metric.

If one does not have good control over $\widetilde{\pi}$ then
\eqref{eq:closeLs} can be difficult to obtain. Instead it is often easier to
replace \eqref{eq:closeLs} with
\begin{equation}
\label{eq:steinBound}\limsup_{T\rightarrow\infty} \frac1T \int_{0}^{T}
\widetilde{P}_{t}(\mathcal{ \widetilde{L}} - \mathcal{ L})g(x) dt \leq
\epsilon\|g\|_{\mathcal{G}}%
\end{equation}
for any $g\in\mathcal{G}$ and $\widetilde{\pi}$-a.e. $x$. As before, we assume
that for some class of functions $\mathcal{H}$ from $\mathbf{R}^{d}%
\rightarrow\mathbf{R}$, one can solve $\mathcal{L}g=h-\int hd\pi$ for
$h\in\mathcal{H}$ with a solution $g\in\mathcal{G}$ such that $\Vert
g\Vert_{\mathcal{G}}\leq K\Vert h\Vert_{\mathcal{H}}$. Observe that if
$\mathcal{G}$ is a class of bounded functions then the second assumption is
always satisfied.

Continuing since $\widetilde{P}_{t}\mathcal{L}g(x)=\widetilde{P}_{t}h(x)-\int
hd\pi$,
\begin{align*}
\widetilde{P}_{T}g(x)-g(x)  &  =\int_{0}^{T}\widetilde{P}_{t}%
\mathcal{\widetilde{L}}g(x)dt\\
&  =\int_{0}^{T}\widetilde{P}_{t}(\mathcal{\widetilde{L}-L})g(x)dt+\int%
_{0}^{T}\widetilde{P}_{t}h(x)dt-T\int_{\mathbf{R}^{d}}hd\pi\;.
\end{align*}
Rearranging, dividing by $T$, and using $|\widetilde{P}_{T}g(x)|\leq
|g|_{\infty}$, produces
\[
\int_{\mathbf{R}^{d}}hd\pi-\frac{1}{T}\int_{0}^{T}\widetilde{P}_{t}%
h(x)dt\leq\frac{2\Vert g\Vert_{\infty}}{T}+\frac{1}{T}\int_{0}^{T}%
\widetilde{P}_{t}(\mathcal{\widetilde{L}-L})g(x)dt\ .
\]
By Birkoff's ergodic theorem, we know that for $\widetilde{\pi}$-almost every
$x$
\[
\lim_{T\rightarrow\infty}\frac{1}{T}\int_{0}^{T}\widetilde{P}_{t}%
h(x)dt=\int_{\mathbf{R}^{d}}hd\widetilde{\pi}\ ,
\]
so from \eqref{eq:steinBound} and $\Vert g\Vert_{\mathcal{G}}\leq K\Vert
h\Vert_{\mathcal{H}}$ we obtain
\[
\int_{\mathbf{R}^{d}}hd\pi-\int_{\mathbf{R}^{d}}hd\widetilde{\pi}\leq\epsilon
K\Vert h\Vert_{\mathcal{H}}%
\]
for any $h\in\mathcal{H}$. Hence in the language of
Section~\ref{sec:ClosenessII}, one has $\rho(\pi,\tilde{\pi})\leq\epsilon K$.

This argument can be modified in many ways, the restriction that $\mathcal{G}$
and $\mathcal{H}$ are classes of bounded functions can be removed by assuming
some control over $\widetilde{P}_{t}g(x)$ uniform in time. If that control can
be maintained using a function which is integrable with respect to
$\widetilde{\pi}$ then the requirement that $\widetilde{\pi}$ be ergodic can
be removed. Although we do not fully explore these issues here, we believe
that the connections made in this subsection are useful.

\section{Variance of the Empirical Time Average}

\label{sec:practice} Theorems~\ref{thm:veryBasicConv} and \ref{thm:moreConv}
are important for implementing time-averaging in computational practice. There
are three types of errors arising in computing stationary averages: (i)
numerical integration error (estimated by $K\Delta$ in (\ref{eq:bias2}) and
(\ref{eq:mm}) and by $K\Delta^{p}$ in (\ref{eq:bias3}) and (\ref{eq:mm2}));
(ii) the error due to the distance from the stationary distribution (i.e., the
error due to the finite time of integration $T$ estimated by $K/T$ in
(\ref{eq:bias1}), (\ref{eq:bias2}) and (\ref{eq:bias3})); (iii) the
statistical error. The first two errors contribute to the bias of the
estimator $\hat{\phi}_{N}$ (see (\ref{eq:bias2}) and (\ref{eq:bias3})). The
error of numerical integration is controlled by the time step and the choice
of a method. It can be estimated in practice using the Talay-Tubaro expansion
(see Section~\ref{sec:expan} and \cite{TalayTubaro90,MilsteinTretyakov2004})
in the usual fashion. The statistical error is contained in the second term of
(\ref{eq:mm}) and related to the variance of the estimator $\hat{\phi}_{N}$
(see the details below and also (\ref{eq:var1})). Both the error due to the
finite time of integration $T$ and the statistical error are controlled by the
choice of the integration time $T$ (or what is the same, the choice of the
number of steps $N$ under fixed time step $\Delta$). They correspond to
properties of the continuous dynamics of the SDE (\ref{eq:sde}) and they are
almost independent (assuming a sufficiently small $\Delta)$ of a method or
time step $\Delta$ used.

Let us consider the statistical error. Theorems~\ref{thm:veryBasicConv}
and~\ref{thm:basicConv} immediately imply that $\mathrm{Var}(\hat{\phi}%
_{N})=O(\Delta^{2}+1/T).$ In order to get a more accurate estimate for the
variance of $\hat{\phi}_{N},$ analogous to the one in the continuous case
(\ref{eq:var1}), i.e., to have the statistical error of the estimator
$\hat{\phi}_{N}$ controlled by $T$ only, we need to make use a mixing-type condition.

The Markov process $X(t)$ defined on the torus $\mathbf{T}^{d}$ by the SDE
(\ref{eq:sde}) is uniformly ergodic, it has exponential mixing rates (see,
e.g., \cite{DMT95,MeynTweedie,Bhattacharya1985}), and there are some positive
$C$ and $\gamma$ such that for any $t>0$ and $\theta>0$ the inequality
\begin{equation}
\left\vert \mathbf{E}\phi(X(t))\phi(X(t+\theta))-\mathbf{E}\phi
(X(t))\mathbf{E}\phi(X(t+\theta))\right\vert \leq Ce^{-\gamma\theta}
\label{eq:mix}%
\end{equation}
holds. Further, as it has been already mentioned before, it is possible in
many settings to show that the Markov chain $X_{n}$ generated by a numerical
method is also geometrically ergodic (cf.
\cite{Talay90,TalayHam02,MattinglyStuartHighamSDENUM02,KlokovVeretennikov2006}%
). Then, due to the compactness of the phase space, the chain is uniformly
ergodic and has an exponential mixing like in (\ref{eq:mix}) \cite[Chapter
16]{MeynTweedie}. We note that in the cited papers one of the conditions to
ensure the ergodicity of $X_{n}$ is the requirement that the numerical method
uses random variables with densities positive everywhere. We do not address
here the question about mixing rate for the Markov chains $X_{n}$ generated by
more general numerical methods treated in this paper leaving it for further
study but instead we assume that the following relaxed mixing condition is
satisfied for $X_{n}$ and a $\phi\in H$ and $l>0:$
\begin{equation}
\left\vert \mathbf{E}\phi(X_{k})\phi(X_{k+l})-\mathbf{E}\phi(X_{k}%
)\mathbf{E}\phi(X_{k+l})\right\vert \leq\frac{K}{\left(  t_{k+l}-t_{k}\right)
^{2}}\ , \label{eq:deco}%
\end{equation}
where $K>0$ is a constant independent of $\Delta,$ $k,$ $l.$ This condition is
much weaker than (\ref{eq:mix}) but it is sufficient for the proof of the
following proposition. At the same time, a faster decorrelation than
(\ref{eq:deco}) does not improve the estimate (\ref{eq:var2}).

\begin{proposition}
\label{prp:var}In the setting of Theorem~\ref{thm:veryBasicConv} and under the
condition (\ref{eq:deco}), one has
\begin{equation}
\mathrm{Var}(\hat{\phi}_{N})\leq\frac{K}{T}, \label{eq:var2}%
\end{equation}
where $T=N\Delta$ and $K>0$ is a constant independent of $\Delta$ and $T$.
\end{proposition}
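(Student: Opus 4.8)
The plan is to bound $\mathrm{Var}(\hat\phi_N)=\mathbf{E}(\hat\phi_N-\mathbf{E}\hat\phi_N)^2$ by expanding the square of the sum in \eqref{eq:estim}. Writing $\phi_n=\phi(X_n)$ and $c_{k,l}\overset{\mbox{\tiny def}}{=}\mathbf{E}\phi_k\phi_{k+l}-\mathbf{E}\phi_k\,\mathbf{E}\phi_{k+l}$ for the (un-normalized) covariance, one has
\[
\mathrm{Var}(\hat\phi_N)=\frac{1}{N^2}\sum_{k=0}^{N-1}\sum_{l=-(k)}^{N-1-k}c_{k,k+l}
\le\frac{1}{N^2}\Big(\sum_{k=0}^{N-1}c_{k,0}+2\sum_{k=0}^{N-1}\sum_{l=1}^{N-1-k}|c_{k,k+l}|\Big).
\]
The diagonal terms satisfy $|c_{k,0}|\le |\phi|_\infty^2$, contributing $O(1/N)=O(\Delta/T)$, which is of the required order. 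For the off-diagonal terms I would split the inner $l$-sum at a threshold: for ``small'' lags where $t_{k+l}-t_k=l\Delta$ is $O(1)$ I use the trivial bound $|c_{k,k+l}|\le 2|\phi|_\infty^2$; for ``large'' lags I invoke the mixing hypothesis \eqref{eq:deco}, $|c_{k,k+l}|\le K/(l\Delta)^2$.

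The key estimate is then $\sum_{l=1}^{N-1-k}|c_{k,k+l}|\le C/\Delta$ uniformly in $k$: indeed summing the small-lag contribution over the $O(1/\Delta)$ lags with $l\Delta\le 1$ gives $O(1/\Delta)$, while $\sum_{l\ge 1/\Delta} K/(l\Delta)^2 = (K/\Delta^2)\sum_{l\ge 1/\Delta} l^{-2} = O(1/\Delta)$ by comparison with $\int_{1/\Delta}^\infty s^{-2}\,ds=\Delta$. Plugging this back,
\[
\mathrm{Var}(\hat\phi_N)\le\frac{1}{N^2}\Big(N|\phi|_\infty^2 + 2N\cdot\frac{C}{\Delta}\Big)
\le\frac{K}{N\Delta}=\frac{K}{T},
\]
since the second term dominates (as $\Delta<1$). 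This is exactly \eqref{eq:var2}. The constant $K$ depends on $|\phi|_\infty$ and the mixing constant in \eqref{eq:deco} but not on $\Delta$ or $T$.

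I do not anticipate a genuine obstacle here, but the one point requiring care is that the mixing bound \eqref{eq:deco} is stated in terms of the physical time gap $t_{k+l}-t_k=l\Delta$, so the threshold between the two regimes must be chosen in $l\Delta$ (not in $l$) for the $1/\Delta$ bound on the per-$k$ covariance sum to come out $\Delta$-uniformly; choosing the split at $l\Delta\sim 1$ does this cleanly. One should also note, as the remark preceding the proposition already does, that a faster-than-quadratic decay in \eqref{eq:deco} would not improve the bound, because the diagonal term alone already forces a $1/T$ floor; conversely quadratic decay is exactly the borderline summability needed for the tail sum to contribute $O(1/\Delta)$ rather than something larger. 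Everything else is the routine double-sum bookkeeping indicated above.
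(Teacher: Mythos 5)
Your proof is correct, but it takes a genuinely different route from the paper's. You bound $\mathrm{Var}(\hat{\phi}_{N})$ directly as the double sum of covariances of the $\phi(X_{k})$ and split each lag sum at $l\Delta\sim1$, using the trivial bound $2|\phi|_{\infty}^{2}$ for short lags and \eqref{eq:deco} for long lags; this requires nothing beyond boundedness of $\phi$ and the mixing hypothesis applied to $\phi$ itself, which is exactly how \eqref{eq:deco} is stated. The paper instead starts from the Poisson-equation identity \eqref{sumedUp}: there the dominant $K/T$ contribution comes from the martingale terms via $\sum_{i}\mathbf{E}|M_{i,N}|^{2}\leq KT$ (orthogonality of martingale increments plays the role your mixing bound plays), and the condition \eqref{eq:deco} is invoked only to control the covariance double sums of the lower-order remainders $S_{i,N}$ and $(\mathcal{L}^{\Delta}-\mathcal{L})\psi_{n}$ --- i.e.\ it is applied to auxiliary functions such as $x\mapsto D^{2}\psi(x)[F(x,\Delta),F(x,\Delta)]$ rather than to $\phi$ --- by essentially the same split-and-sum computation you perform. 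Your argument is more elementary, bypasses the Poisson equation and the regularity of $\psi$ entirely, and hews more closely to the literal hypothesis (the paper's proof tacitly needs \eqref{eq:deco} for those auxiliary observables, not just for $\phi$). What the paper's route buys is the decomposition emphasized in Section~\ref{sec:practice}: it exhibits the $1/T$ as coming purely from the martingale (statistical) part, with the numerical remainder terms contributing only $O(\Delta/T)$ after division by $T^{2}$, whereas your direct computation does not separate these two sources of fluctuation. The one purely cosmetic issue in your write-up is the notation $c_{k,k+l}$ in the double sum, which by your own definition of $c_{k,l}$ should read $c_{k,l}$; the intended covariance is clear and the estimates are unaffected.
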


\begin{proof}
It follows from \eqref{sumedUp} that
\begin{gather}
\mathrm{Var}(\hat{\phi}_{N})\leq K\frac{\mathbf{E}\left\vert \psi
_{N}-\mathbf{E}\psi_{N}\right\vert ^{2}}{T^{2}}\label{eq:pp1}\\
+\frac{K}{N^{2}}\sum_{i=0}^{N-1}\sum_{j=0}^{N-1}\mathbf{E}\Big[  \left(
(\mathcal{L}^{\Delta}-\mathcal{L})\psi_{i}-\mathbf{E}(\mathcal{L}^{\Delta
}-\mathcal{L})\psi_{i}\right)  \left(  (\mathcal{L}^{\Delta}-\mathcal{L}%
)\psi_{j}-\mathbf{E}(\mathcal{L}^{\Delta}-\mathcal{L})\psi_{j}\right)  \Big]
\nonumber\\
+\frac{1}{T^{2}}\sum_{i=0}^{2}\mathbf{E}\left\vert S_{i,N}-\mathbf{E}%
S_{i,N}\right\vert ^{2}+\frac{1}{T^{2}}\sum_{i=0}^{3}\mathbf{E}|M_{i,N}%
|^{2}.\nonumber
\end{gather}
We have used here that $M_{i,N}$ are martingales. Using the estimates for
$\mathbf{E}|M_{i,N}|^{2}$ from Theorem~\ref{thm:veryBasicConv}, we get that
$\sum_{i=0}^{3}\mathbf{E}|M_{i,N}|^{2}\leq KT.$

To estimate the terms with $S_{i,N}$ and $(\mathcal{L}^{\Delta}-\mathcal{L}%
)\psi$ in (\ref{eq:pp1}), we exploit the condition (\ref{eq:deco}). For
example, consider the term with $S_{1,N}.$ We have
\begin{align*}
\mathbf{E}\left\vert S_{1,N}-\mathbf{E}S_{1,N}\right\vert ^{2}  &
=\frac{\Delta^{4}}{4}\mathbf{E}\Big( \sum_{n=0}^{N-1}\left(  D^{2}\psi
_{n}[F_{n},F_{n}]-\mathbf{E}D^{2}\psi_{n}[F_{n},F_{n}]\right)  \Big) ^{2}\\
&  =\frac{\Delta^{4}}{4}\sum_{n=0}^{N-1}\mathbf{E}\Big( \left(  D^{2}\psi
_{n}[F_{n},F_{n}]-\mathbf{E}D^{2}\psi_{n}[F_{n},F_{n}]\right)  \Big) ^{2}\\
&  +\frac{\Delta^{4}}{2}\sum_{i=0}^{N-1}\sum_{j=i+1}^{N-1}\mathbf{E}%
\Big( \left(  D^{2}\psi_{i}[F_{i},F_{i}]-\mathbf{E}D^{2}\psi_{i}[F_{i}%
,F_{i}]\right) \\
&  \times\left(  D^{2}\psi_{j}[F_{j},F_{j}]-\mathbf{E}D^{2}\psi_{j}%
[F_{j},F_{j}]\right)  \Big)\\
&  \leq K\Delta^{3}T+K\Delta^{4}\sum_{i=0}^{N-1}\sum_{j=i+1}^{N-1}\frac
{1}{\left(  t_{j}-t_{i}\right)  ^{2}}\leq K\Delta T.
\end{align*}

Analyzing analogously the other terms in (\ref{eq:pp1}), we arrive at the
stated result. \medskip
\end{proof}

Although we proved Proposition~\ref{prp:var} for the method (\ref{eq:euler}),
it is also valid for a more general class of numerical methods from
Section~\ref{sec:high}. \medskip

In practice one usually estimates the statistical error of $\hat{\phi}_{N}$ as
follows\footnote{Of course, better statistical estimators can be used to
quantify the statistical error of the time averaging but we restrict ourselves
here to a simple one.}. We run a long trajectory $MT$ split into $M$ blocks of
a large length $T=\Delta N$ each. We evaluate the estimators $_{m}\hat{\phi
}_{N},$ $m=1,\ldots,M,$ for each block. Since $T$ is big and a time decay of
correlations is usually fast, $_{m}{\hat{\phi}}_{N}$ can be considered as
almost uncorrelated. We compute the sampled variance
\[
\hat{D}=\frac{1}{M-1}\sum_{m=1}^{M}\big({_{m}{\hat{\phi} }_{N} }\big)^{2} -
\Big( \frac{1}{M}\sum_{m=1}^{M} {_{m}}{\hat{\phi}}_{N}\Big)^{2}.
\]
For a sufficiently large $T$ and $M$, $\mathbf{E}\,\hat{\phi}_{N}$ belongs to
the confidence interval
\[
\mathbf{E}\,\hat{\phi}_{N}\in\big(\hat{\phi}_{NM}-c {\textstyle\frac
{\sqrt{\hat{D}}}{\sqrt{M}}},\hat{\phi}_{NM}+c{\textstyle\frac{\sqrt{\hat{D}}%
}{\sqrt{M}}}\big)\ ,
\]
with probability, for example $0.95$ for $c=2$ and $0.997$ for $c=3.$ Note
that $\mathbf{E}\,\hat{\phi}_{N}$ contains the two errors forming the bias as
explained at the beginning of this section. We also pay attention to the fact
that $\hat{D}\sim1/T$ (cf. (\ref{eq:var2})), i.e., it is inverse proportional
to the product $\Delta N.$

\begin{remark}
Instead of time averaging considered in this paper, stationary averages can be
computed using ensemble averaging, i.e., by the following estimate for the
stationary average $\bar{\phi}$:
\begin{equation}
\bar{\phi}\approx\mathbf{E}\phi(X(t))\approx\mathbf{E}\phi(\bar{X}%
(t))\approx\check{\phi}\overset{\mbox{\tiny def}}{=}\frac{1}{L}\sum_{l=1}%
^{L}\phi\big(\bar{X}^{(l)}(t)\big)\ ,\nonumber
\end{equation}
where $t$ is a sufficiently large time, $\bar{X}$ is an approximation of $X,$
and $L$ is the number of independent approximate realizations. The total error
$R_{\check{\phi}}\overset{\mbox{\tiny def}}{=}\check{\phi}-\bar{\phi}$
consists of three parts: the error $\varepsilon$ of the approximation
$\bar{\phi}$ by $\mathbf{E}\phi(X(t))$; the error of numerical integration
$C\Delta^{p}$ (here $p$ is the weak order of the method); and the Monte Carlo
error which is proportional to $1/\sqrt{L}.$ More specifically
\[
\left\vert \mathrm{Bias}(\check{\phi})\right\vert =\left\vert \mathbf{E}%
\check{\phi}-\bar{\phi}\right\vert \leq K\Delta^{p}+\varepsilon
\ ,\ \ \ \ \ \mathrm{Var}(\check{\phi})=O(1/L)\ .
\]
Here, in comparison with the time-averaging approach, each error is controlled
by its own parameter: sufficiently large $t$ ensures smallness of
$\varepsilon;$ time step $\Delta$ (as well as the choice of a numerical
method) controls the numerical integration error; the statistical error is
regulated by choosing an appropriate number of independent trajectories $L.$
For further details see \cite{MilsteinTretyakov2007}.
\end{remark}

\section*{\textbf{Acknowledgments}}

JCM thanks the NSF for its support, through DMS-0616710 and DMS-0449910, as
well as the support of a Sloan Fellowship. AMS is grateful to the ERC and
EPSRC for financial support. MVT was partially supported by\ the EPSRC
Research Grant EP/D049792/1. JCM thanks Martin Harier and Sourav Chatterjee
for useful discussions and the hospitality of the Warwick Mathematics
Institute where the core of this work was done.
The authors thank an anonymous referee and 
Denis Talay for useful suggestions which
improved the manuscript.



\bibliographystyle{siam}
\bibliography{./refs}

\begin{thebibliography}{10}

\bibitem{AndersonMattingly09}
{\sc D.~F. Anderson and J.~C. Mattingly}, {\em A weak trapezoidal method for a
  class of stochastic differential equations}, 2009.

\bibitem{BallyTalayLESI96}
{\sc V.~Bally and D.~Talay}, {\em The law of the {E}uler scheme for stochastic
  differential equations. {I}. {C}onvergence rate of the distribution
  function}, Probab. Theory Related Fields, 104 (1996), pp.~43--60.

\bibitem{BallyTalayLESII96}
\leavevmode\vrule height 2pt depth -1.6pt width 23pt, {\em The law of the
  {E}uler scheme for stochastic differential equations. {II}. {C}onvergence
  rate of the density}, Monte Carlo Methods Appl., 2 (1996), pp.~93--128.

\bibitem{Barbour90}
{\sc A.~D. Barbour}, {\em Stein's method for diffusion approximations}, Probab.
  Theory Related Fields, 84 (1990), pp.~297--322.

\bibitem{Bhattacharya1985}
{\sc R.~Bhattacharya}, {\em A central limit theorem for diffusions with
  periodic coefficients}, Annals of Probability, 13 (1985), pp.~385--396.

\bibitem{bhattacharya_1982}
{\sc R.~N Bhattacharya}, {\em On classical limit theorems for diffusions},
  Sankhy\=a (Statistics). The Indian Journal of Statistics. Series A, 44
  (1982), p.~47–71.

\bibitem{BouVan09}
{\sc N.~Bou-Rabee and E.~Vanden-Eijnden}, {\em Pathwise accuracy and ergodicity
  of metropolized integrators for {S}{D}{E}s}, Communications on Pure and
  Applied Mathematics, 63 (2009), pp.~655--696.

\bibitem{CerraiFreidlin09}
{\sc S.~Cerrai and M.~Freidlin}, {\em Averaging principle for a class of
  stochastic reaction-diffusion equations}, Probab. Theory Related Fields, 144
  (2009), pp.~137--177.

\bibitem{Chatterjee07}
{\sc S.~Chatterjee}, {\em A new approach to strong embeddings}.
\newblock Preprint: arXiv:0711.0501, 2007.

\bibitem{DaPratoZabczyk02}
{\sc G.~Da~Prato and J.~Zabczyk}, {\em Second order partial differential
  equations in {H}ilbert spaces}, vol.~293 of London Mathematical Society
  Lecture Note Series, Cambridge University Press, Cambridge, 2002.

\bibitem{DMT95}
{\sc D.~Down, S.P. Meyn, and R.~L. Tweedie}, {\em Exponential and uniform
  ergodicity of {M}arkov processes}, Annals of Probability, 23 (1995),
  pp.~1671--1691.

\bibitem{Elliot01}
{\sc R.~J. Elliott}, {\em A continuous time {K}ronecker's lemma and martingale
  convergence}, Stochastic Analysis and Applications, 19 (2001), pp.~433--437.

\bibitem{EthierKurtz86}
{\sc S.N. Ethier and T.G. Kurtz}, {\em Markov processes}, Wiley Series in
  Probability and Mathematical Statistics: Probability and Mathematical
  Statistics, John Wiley \& Sons Inc., New York, 1986.
\newblock Characterization and convergence.

\bibitem{GrorudTalay96}
{\sc A.~Grorud and D.~Talay}, {\em Approximation of {L}yapunov exponents of
  nonlinear stochastic differential equations}, SIAM J. Appl. Math., 56 (1996),
  pp.~627--650.

\bibitem{HaiMat08:??}
{\sc M.~Hairer and J.~C. Mattingly}, {\em Spectral gaps in {W}asserstein
  distances and the 2{D} stochastic {N}avier-{S}tokes equations}, Ann. Probab.,
  36 (2008), pp.~2050--2091.

\bibitem{hairerMattingly2009}
{\sc M.~Hairer, J.~C. Mattingly, and M.~Scheutzow}, {\em Asymptotic coupling
  and a weak form of harris' theorem with applications to stochastic delay
  equations}, 2009.

\bibitem{Hasminskii1980}
{\sc R.Z. Hasminskii}, {\em Stochastic Stability of Differential Equations},
  Sijthoff \& Noordhoff, 1980.

\bibitem{HormanderIV94}
{\sc L.~H{\"o}rmander}, {\em The analysis of linear partial differential
  operators. {III}}, Classics in Mathematics, Springer, Berlin, 2007.
\newblock Pseudo-differential operators, Reprint of the 1994 edition.

\bibitem{KloedenPlaten92}
{\sc P.~Kloeden and E.~Platen}, {\em Numerical solution of stochastic
  differential equations}, vol.~23 of Applications of Mathematics (New York),
  Springer-Verlag, Berlin, 1992.

\bibitem{KlokovVeretennikov2006}
{\sc S.A. Klokov and A.Yu. Veretennikov}, {\em On mixing and convergence rates
  for a family of {M}arkov processes approximating {SDE}s}, Random Operators
  and Stochastic Equations, 14 (2006), pp.~103--126.

\bibitem{Krylov96Hol}
{\sc N.V. Krylov}, {\em Lectures on elliptic and parabolic equations in
  {H}\"older spaces}, vol.~12 of Graduate Studies in Mathematics, American
  Mathematical Society, Providence, RI, 1996.

\bibitem{LambaMattinglyStuart07}
{\sc H.~Lamba, J.~C. Mattingly, and A.~M. Stuart}, {\em An adaptive
  {E}uler-{M}aruyama scheme for {SDE}s: convergence and stability}, IMA J.
  Numer. Anal., 27 (2007), pp.~479--506.

\bibitem{LambertonPages02}
{\sc D.~Lamberton and G.~Pag{\`e}s}, {\em Recursive computation of the
  invariant distribution of a diffusion}, Bernoulli, 8 (2002), pp.~367--405.

\bibitem{LambertonPages03RCIM}
\leavevmode\vrule height 2pt depth -1.6pt width 23pt, {\em Recursive
  computation of the invariant distribution of a diffusion: the case of a
  weakly mean reverting drift}, Stoch. Dyn., 3 (2003), pp.~435--451.

\bibitem{Lem05}
{\sc V.~Lemaire}, {\em Estimation r{\'e}cursive de la mesure invariante d'un
  processus de diffusion}, PhD thesis, Universit{\'e} de Marne-la-Vall{\'e}e,
  2005.

\bibitem{Lem07}
\leavevmode\vrule height 2pt depth -1.6pt width 23pt, {\em An adaptive scheme
  for the approximation of dissipative systems}, Stoch. Proc. and Applic., 117
  (2007), pp.~1491--1518.

\bibitem{MattinglyStuartHighamSDENUM02}
{\sc J.~C. Mattingly, A.~M. Stuart, and D.~J. Higham}, {\em Ergodicity for
  {SDE}s and approximations: locally {L}ipschitz vector fields and degenerate
  noise}, Stochastic Process. Appl., 101 (2002), pp.~185--232.

\bibitem{MeynTweedie}
{\sc S.~P. Meyn and R.~L. Tweedie}, {\em Markov Chains and Stochastic
  Stability}, Springer--Verlag, 1993.

\bibitem{MilsteinTretyakov2004}
{\sc G.N. Milstein and M.V. Tretyakov}, {\em Stochastic Numerics for
  Mathematical Physics}, Springer, Berlin, Heidelberg, New York, 2004.

\bibitem{MilsteinTretyakov2007}
\leavevmode\vrule height 2pt depth -1.6pt width 23pt, {\em Computing ergodic
  limits for {L}angevin equations}, Physica D, 229 (2007), pp.~81--95.

\bibitem{Pag01}
{\sc G.~Pag\`{e}s}, {\em {Sur quelques algorithmes r\'{e}cursifs pour les
  probabilit\'{e}s num\'{e}riques.}}, ESAIM, Probab. Stat., 5 (2001),
  pp.~141--170.

\bibitem{Pan08a}
{\sc F.~Panloup}, {\em Computation of the invariant measure for a {L}evy driven
  sde: Rate of convergence}, Stoch. Proc. and Applic., 118 (2008),
  pp.~1351--1384.

\bibitem{Pan08b}
\leavevmode\vrule height 2pt depth -1.6pt width 23pt, {\em Recursive
  computation of the invariant measure of a stochastic differential equation
  driven by a {L}evy process}, Ann. Appl. Prob., 18 (2008), pp.~379--426.

\bibitem{Papanicolaou99}
{\sc G.~Papanicolaou}, {\em Lecture notes}.
\newblock Unpublised lecture notes, Stanford University, 1999.

\bibitem{pardoux_poisson_2001}
{\sc E.~Pardoux and A.~Y. Veretennikov}, {\em On the {P}oisson equation and
  diffusion approximation. {I}}, The Annals of Probability, 29 (2001),
  pp.~1061--1085.

\bibitem{pardoux_poisson_2003}
\leavevmode\vrule height 2pt depth -1.6pt width 23pt, {\em On the {P}oisson
  equation and diffusion approximation. {II}}, The Annals of Probability, 31
  (2003), pp.~1166--1192.

\bibitem{pardoux_poisson_2005}
\leavevmode\vrule height 2pt depth -1.6pt width 23pt, {\em On the {P}oisson
  equation and diffusion approximation. {III}}, The Annals of Probability, 33
  (2005), pp.~1111--1133.

\bibitem{StuartPavliotis08}
{\sc G.A. Pavliotis and A.M. Stuart}, {\em Multiscale methods}, vol.~53 of
  Texts in Applied Mathematics, Springer, New York, 2008.
\newblock Averaging and homogenization.

\bibitem{robtwe96}
{\sc G.O. Roberts and R.L. Tweedie}, {\em Exponential convergence of {L}angevin
  distributions and their discrete approximations}, Bernoulli, 2 (1996),
  pp.~341--363.

\bibitem{RobertsTweedie96}
{\sc G.~O. Roberts and R.~L. Tweedie}, {\em Geometric convergence and central
  limit theorems for multidimensional {H}astings and {M}etropolis algorithms},
  Biometrika, 83 (1996), pp.~95--110.

\bibitem{ShardlowStuartPENA00}
{\sc T.~Shardlow and A.~M. Stuart}, {\em A perturbation theory for ergodic
  {M}arkov chains and application to numerical approximations}, SIAM J. Numer.
  Anal., 37 (2000), pp.~1120--1137 (electronic).

\bibitem{StroockPDE08}
{\sc D.W. Stroock}, {\em Partial differential equations for probabilists},
  vol.~112 of Cambridge Studies in Advanced Mathematics, Cambridge University
  Press, Cambridge, 2008.

\bibitem{Talay90}
{\sc D.~Talay}, {\em Second order discretization schemes of stochastic
  differential systems for the computation of the invariant law}, Stochastics
  and Stochastics Reports, 29 (1990), pp.~13--36.

\bibitem{TalayHam02}
\leavevmode\vrule height 2pt depth -1.6pt width 23pt, {\em Stochastic
  {H}amiltonian systems: exponential convergence to the invariant measure, and
  discretization by the implicit {E}uler scheme}, Markov Process. Related
  Fields, 8 (2002), pp.~163--198.

\bibitem{TalayTubaro90}
{\sc D.~Talay and L.~Tubaro}, {\em Expansion of the global error for numerical
  schemes solving stochastic differential equations}, Stochastic Anal. Appl., 8
  (1990), pp.~483--509.

\end{thebibliography}

\end{document}